\numberwithin{equation}{section}
\theoremstyle{plain}
\newtheorem{theorem}{Theorem}[section]
\newtheorem{lemma}[theorem]{Lemma}
\newtheorem{corollary}[theorem]{Corollary}
\newtheorem{proposition}[theorem]{Proposition}
\theoremstyle{definition}
\newtheorem{definition}[theorem]{Definition}
\newtheorem{example}[theorem]{Example}
\newtheorem{remark}[theorem]{Remark}
\theoremstyle{plain}
\newtheorem*{theorem1}{Theorem 1.1}
\theoremstyle{definition}
\newtheorem*{notation}{Notation}
\newcommand{\coker}{\mathrm{coker}}
\newcommand{\im}{\mathrm{im}}
\newcommand{\ind}{\mathrm{ind}}
\newcommand{\tr}{\mathrm{tr}}
\begin{document}

\title{Joint torsion equals the determinant invariant}

\author{Joseph Migler}

\address{Department of Mathematics \\ University of Colorado \\ Boulder, CO 80309-0395}

\email{joseph.migler@colorado.edu}

\date{}

\begin{abstract}
A determinant in algebraic $K$-theory is associated to any two almost commuting Fredholm operators.
On the other hand, one can calculate a homologically defined invariant known as joint torsion.  We answer in the affirmative a conjecture of Richard Carey and Joel Pincus, namely that these two invariants agree.  In particular, this implies that joint torsion is norm continuous, depends only on the images of the operators modulo trace class, and satisfies the expected Steinberg relations.  Moreover, we show that the determinant invariant of two commuting operators can be computed simply as a determinant on a finite dimensional vector space.
\\
\end{abstract}

\subjclass[2000]{Primary 19C20; Secondary 47A13, 47B35, 18G35}

\keywords{joint torsion; perturbation vectors; determinants in $K$-theory; multivariable operator theory}


\maketitle

\tableofcontents

\newpage

\section{Introduction}

Let $A$ and $B$ be two invertible operators on a Hilbert space $\mathscr H$ that commute modulo the trace class $\mathcal L^1 (\mathscr H)$.  The multiplicative commutator $ABA^{-1}B^{-1}$ is an invertible determinant class operator, and therefore has a nonzero determinant.  The assignment 
$
(A,B) \mapsto \det ABA^{-1}B^{-1}
$
is bimultiplicative and skew-symmetric.  Moreover, $\det ABA^{-1}B^{-1}= \det \tilde A \tilde B \tilde A^{-1} \tilde B^{-1}$ for any invertible trace class perturbations $\tilde A$ and $\tilde B$ of $A$ and $B$, respectively.

L. Brown observed in \cite{Brown} that this is a special case of a more general phenomenon.  Indeed, any two bounded Fredholm operators $A$ and $B$ that commute modulo trace class have invertible commuting images $a$ and $b$ in the quotient $\mathcal L / \mathcal L^1$.  Here, $\mathcal L = \mathcal L ( \mathscr H )$ is the algebra of bounded linear operators on $\mathscr H$, and $\mathcal L^1 = \mathcal L^1( \mathscr H )$ is the ideal of trace class operators.  Then there is a Steinberg symbol $ \{ a,b \} $ in the second algebraic $K$-group $K_2(\mathcal L / \mathcal L^1)$, and thus a determinant invariant $d(a,b) = \det \partial \{ a,b \} \in \mathbf C^\times$.  See Section \ref{subsec:preliminaries-K-theory} below for more details.  Brown showed that when $A$ and $B$ are invertible, $d(a,b) = \det ABA^{-1}B^{-1}$.  This immediately yields the remarkable fact that the determinant of the multiplicative commutator depends only on the Steinberg symbol in $K$-theory.  See also the paper of J. Helton and R. Howe \cite{Helton-Howe1}.

As an example, consider two nonvanishing smooth functions $f$ and $g$ on the unit circle.  The Toeplitz operators $T_f$ and $T_g$ are Fredholm and commute with each other modulo trace class.
In \cite[Theorem 7]{Joint}, R. Carey and J. Pincus use a calculation of A. Beilinson \cite{Beilinson} to show that
\begin{equation*}
d(T_f + \mathcal L^1, T_g + \mathcal L^1) = \exp \frac{1}{2\pi i} \left( \int_{S^1} \, \log f \, d(\log g) - \log g(p) \int_{S^1} \, d(\log f) \right).
\end{equation*}
The integrals are taken counterclockwise starting at any point $p \in S^1$.  For a continuous function $h$, the logarithm $\log h$ is interpreted as continuous in the argument $\theta$.  For example, if $z = e^{i \theta}$, then $\log (z^2) = 2 i \theta$ for $0 \leq \theta < 2\pi$.

More recently, J. Kaad has shown in \cite{Kaad-Comparison} that the determinant invariant coincides with the Connes-Karoubi multiplicative character \cite{Connes-Karoubi}.  Furthermore, Kaad has constructed a product in relative $K$-theory and investigated the relative Chern character with values in continuous cyclic homology \cite{Kaad-Calculation}.  He uses these results to calculate the multiplicative character applied to Loday products of exponentials.

Let $A$ and $B$ be commuting operators with images $a$ and $b$ in $\mathcal L / \mathcal L^1$.  Carey and Pincus showed in \cite[Theorem 1.1]{Reciprocity} that
\begin{equation} \label{eq:Lefschetz-z1-z2}
d(a-z_1,b-z_2) =  \frac{\det (B-z_2)|_{\ker (A-z_1)}}{\det (B-z_2)|_{\coker \, (A-z_1)}} \, \frac{\det (A-z_1)|_{\coker \, (B-z_2)}}{\det (A-z_1)|_{\ker (B-z_2)}}
\end{equation}
whenever $a-z_1$ and $a-z_2$ are invertible and $(z_1,z_2) \notin \sigma_T(A,B)$, the Taylor joint spectrum of $A$ and $B$.  However, the left hand side of \eqref{eq:Lefschetz-z1-z2} is defined even if $(z_1,z_2) \in \sigma_T(A,B)$.  Thus, they constructed an invariant $\tau(A,B)$, known as joint torsion, for any two commuting Fredholm operators $A$ and $B$.  This generalizes the multiplicative Lefschetz number on the right hand side of \eqref{eq:Lefschetz-z1-z2}.

To define joint torsion, Carey and Pincus use the notion of algebraic torsion.  For any exact sequence $(V_\bullet, d_\bullet)$ of finite dimensional vector spaces, there is a canonical generator $\tau(V_\bullet, d_\bullet)$ of the determinant line $\det V_\bullet$, known as the torsion.  For example, the torsion of an isomorphism of a finite dimensional vector space is its determinant.  
As another example, J.-M. Bismut, H. Gillet, and C. Soul\'{e} have shown that the Ray-Singer analytic torsion can be calculated as the norm of $\tau(V_\bullet, d_\bullet)$ \cite[Proposition 1.5]{BGS1}.
See also Section \ref{subsec:preliminaries-jt} below.

Kaad has generalized the notion of joint torsion to $n\geq 2$ commuting operators \cite{Kaad}.  Moreover, he shows that joint torsion is multiplicative, satisfies cocycle identities, and is trivial under appropriate Fredholm assumptions.  He has also investigated the relationship between joint torsion on the one hand, and determinant functors and $K$-theory of triangulated categories on the other.

Carey and Pincus extended their definition of joint torsion to two almost commuting Fredholm operators \cite{Perturbation}.  Thus, let $A$ and $B$ be Fredholm operators with $[A,B] \in \mathcal L^1$, and moreover, assume the existence of operators $C$ and $D$ such that $AB=CD$, $A-D \in \mathcal L^1$, and $B-C \in \mathcal L^1$.  They then proceed as before, defining $\tau(A,B,C,D)$ in terms of short exact sequences of Koszul complexes.  However, the result is no longer a scalar, but rather an element of a certain determinant line.  To obtain a scalar, Carey and Pincus associate a perturbation vector $\sigma_{A,A'}$ to each pair of Fredholm operators $A$ and $A'$ with $A-A' \in \mathcal L^1$.  The perturbation vector $\sigma_{A,A'}$ is a canonical generator of the determinant line $\det H_\bullet(A) \otimes \det H_\bullet(A')^*$.
Here, we use the notation $\det H_\bullet(A) = ( \det \ker A)^* \otimes ( \det \coker \, A)$, where $\det$ denotes the top exterior power.  See also Section \ref{subsec:preliminaries-almost} below for more details on the constructions.

Perturbation vectors can be seen as a generalization of the classical perturbation determinant $\det A^{-1} A'$.  Carey and Pincus have shown in \cite[Theorem 15]{Perturbation} that perturbation vectors form a nonvanishing section of a Quillen determinant line bundle \cite{Quillen}.  Moreover, they have applied joint torsion to Toeplitz operators, especially problems where standard techniques only apply to symbols with zero winding number.  They prove Szeg\H{o} limit theorems on the asymptotic behavior of determinants of Toeplitz operators whose symbols have nonzero winding number \cite{Perturbation}.  In the case when $f,g \in H^{\infty}(S^1)$, the joint torsion $\tau(T_f,T_g)$ is the product of tame symbols \cite{Joint}, and can be expressed in terms of Deligne cohomology \cite{Deligne}.  In particular, the determinant invariant $d(T_f + \mathcal L^1, T_g + \mathcal L^1)$ is equal to the joint torsion $\tau(T_f, T_g)$ when $f$ and $g$ are smooth functions in $H^{\infty}(S^1)$.

More generally, Carey and Pincus state in \cite[Section 8, p.~345]{Perturbation}:
\begin{quote}

The existence of the identification map (in the existence theorem for the perturbation vector) has uncovered a basic problem - which deserves to be stated as a question or perhaps as a conjecture:

Let $a, b$ be commuting units in $\mathcal L(\mathscr H) / \mathcal L^1(\mathscr H)$.  Let $A,B,C,D$ be elements in $\mathcal L(\mathscr H)$ so that $AB=CD$ and let $A,D$ denote lifts of $a$ and $B,C$ denote lifts of $b$.  In what generality is it true that $\det \partial \{ a,b \} = \tau(A,B,C,D)$?
\end{quote}

This is true for invertible operators $A$, $B$, $C$, and $D$ \cite[Section 8]{Perturbation}.
Carey and Pincus proved in \cite[Theorem 1.1]{Reciprocity} that this is also true for commuting Fredholm operators $A$ and $B$ with acyclic Koszul complex $K_\bullet(A,B)$.  
More generally, they have shown in unpublished work that $d(a,b) = \tau(A,B,B,A)$ for any commuting Fredholm operators $A$ and $B$ (see \cite[Theorem 2]{Joint}).

The main result of the present paper is the following theorem, which answers the above question in full generality.

\begin{theorem} \label{thm:jt=di}
Let $a$ and $b$ be commuting units in $\mathcal L(\mathscr H) / \mathcal L^1(\mathscr H)$.
Let $A, D \in \mathcal L(\mathscr H)$ be lifts of $a$, and let $B, C \in \mathcal L(\mathscr H)$ be lifts of $b$ such that $AB=CD$.
Then $d(a,b) = \tau(A,B,C,D)$.
\end{theorem}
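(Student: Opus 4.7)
My strategy is to reduce the statement to cases already established by Carey and Pincus, exploiting trace-class perturbation invariance to connect an arbitrary Fredholm quadruple satisfying $AB = CD$ to simpler reference quadruples. Since $d(a,b) = \det\partial\{a,b\}$ depends only on the images $a,b \in \mathcal L/\mathcal L^1$, all the work is on the side of the joint torsion.

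I would first establish a \emph{perturbation invariance principle}: if $(A,B,C,D)$ and $(A',B',C',D')$ are Fredholm quadruples with the same images $a,b$, both satisfying the factorization relation, and with entries differing by trace-class operators, then $\tau(A,B,C,D) = \tau(A',B',C',D')$. At the level of its definition, $\tau$ is built from a short exact sequence of Koszul-type complexes arising from the factorization, whose torsion lives in a determinant line trivialized by the perturbation vectors $\sigma_{A,D}$ and $\sigma_{B,C}$. The invariance principle should follow by combining the functoriality of perturbation vectors as nowhere-vanishing sections of the Quillen bundle \cite[Theorem 15]{Perturbation}, Kaad's cocycle identities for joint torsion \cite{Kaad}, and the continuous dependence of chain-level torsion on the differentials.

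Armed with this invariance, I would reduce to one of the two settings already known. Either perturb $(A,B,C,D)$ to an invertible quadruple, for which the identity $d(a,b) = \tau(A,B,C,D)$ is established in \cite[Section 8]{Perturbation}; or perturb to the shape $(A',B',B',A')$ with $A',B'$ genuinely commuting, to which the Carey--Pincus result $d(a,b) = \tau(A',B',B',A')$ of \cite[Theorem 2]{Joint} applies. The invertible route seems the more universally available, since invertible trace-class perturbations of a Fredholm operator are dense; commuting lifts are in general obstructed by Brown--Douglas--Fillmore-type considerations and so cannot be produced unconditionally by a trace-class move.

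The main obstacle is verifying the invariance principle \emph{while preserving} the constraint $AB = CD$. One cannot freely perturb all four operators: changing $A$ to $A + E$ with $E \in \mathcal L^1$ forces a compensating trace-class adjustment in $B$, $C$, or $D$ to maintain the relation. Managing these coupled perturbations, while tracking the behavior of the perturbation vectors and the torsions of the associated Koszul-type complexes, requires careful bookkeeping at the chain level. I expect this is best handled by constructing an auxiliary bicomplex or mapping cone interpolating between the two quadruples and then comparing torsions using the multiplicativity of torsion in short exact sequences, so that the ratio $\tau(A,B,C,D)/\tau(A',B',C',D')$ reduces to a product of perturbation-vector identities that collapses to $1$.
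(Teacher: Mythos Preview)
Your proposal has a genuine gap and a factual error.

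The error first: you claim the invertible route is ``universally available, since invertible trace-class perturbations of a Fredholm operator are dense.'' This is false. A Fredholm operator $A$ admits an invertible trace-class perturbation if and only if $\operatorname{ind} A = 0$; the index is invariant under compact perturbation. So your reduction to the invertible case of \cite[Section 8]{Perturbation} is only available when $A,B,C,D$ all have index zero, and you have not addressed what to do otherwise. (The paper handles this by a direct-sum stabilization $A \mapsto A \oplus Q \oplus I$, etc., to force index zero before anything else.)

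The gap is that your ``perturbation invariance principle'' is essentially a restatement of the theorem: once you know $\tau(A,B,C,D)$ depends only on $(a,b)$, the equality with $d(a,b)$ follows from any single known case. So all the content is in that principle, and your sketch for it---Quillen-bundle functoriality, Kaad's cocycles, an unspecified interpolating bicomplex---does not amount to an argument. In particular, Kaad's cocycle and triviality results in \cite{Kaad} are for genuinely commuting tuples and do not directly apply to the almost-commuting quadruple $(A,B,C,D)$; and ``continuous dependence'' is not obviously helpful when the constraint $AB=CD$ must be maintained along the path and the homology dimensions jump.

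The paper's route is different in spirit. It does not attempt to prove invariance first. Instead it establishes explicit transformation formulas: for invertible $U$ with $[a,u]=0$,
\[
\tau(A,BU,CU,U^{-1}DU) = d(a,u)\cdot \tau(A,B,C,D),
\]
and a companion formula in the other slot, plus $\tau(A,B,CU,U^{-1}D)=\tau(A,B,C,D)$ for determinant-class $U$. These are proved by direct computation with perturbation vectors and torsion of the exact sequences $\mathcal E_{A,D}$, $\mathcal E_{B,C}$. Applying them with $U,V$ invertible parametrices of $A,B$ modulo finite rank (in the index-zero case) transforms the quadruple into one where every entry is $I + (\text{finite rank})$, and the accumulated factor is exactly $d(a,b)$. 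The remaining joint torsion is then $1$ by a finite-dimensional computation (Proposition~\ref{prop:finite-dimensional}). So $d(a,b)$ is not verified against a pre-existing case; it \emph{emerges} from the transformation rules. Invariance of $\tau$ under change of lifts is a corollary, not an input.
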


The organization of this paper is as follows.  In Section \ref{sec:preliminaries}, we review the determinant invariant in algebraic $K$-theory and joint torsion, first for commuting operators, and then for almost commuting operators.  In Section \ref{sec:finite-dimensional}, we show that joint torsion is trivial in a finite dimensional space.  In the final section, we establish a number of key results on factoring perturbation vectors and joint torsion.  We then prove Theorem \ref{thm:jt=di} and discuss a number of consequences.

\subsection*{Acknowledgments}
I am indebted to Alexander Gorokhovsky for introducing me to joint torsion and for countless enlightening conversations.  This work would not have happened without his generosity in sharing both his time and knowledge.
I would also like to thank Richard Carey for his patience in answering my questions.
I am grateful to Nigel Higson for numerous enlightening conversations.  
I wish to thank Jerry Kaminker for encouraging me to investigate joint torsion, Deligne cohomology, and the determinant invariant.
I would like to thank Matthias Lesch for directing me to the literature on quasicomplexes.
I would also like to thank Mariusz Wodzicki for an interesting conversation on the subject.

\section{Preliminaries} \label{sec:preliminaries}

\subsection{The determinant invariant} \label{subsec:preliminaries-K-theory}

For any unital ring $R$ and ideal $I$, there are algebraic $K$-groups $K_i(R)$, $K_i(R/I)$, and $K_i(R,I)$ that fit into Quillen's long exact sequence
\[
\dots \to K_{i+1}(R/I) \xrightarrow{\partial} K_i(R,I) \to K_i(R) \to K_i(R/I) \xrightarrow{\partial} \dots
\]
We are mainly interested in the map $\partial : K_{2}(R/I) \xrightarrow{} K_1(R,I)$, so let us begin by recalling the relevant definitions.  See also \cite[Chapters 2 and 4]{Rosenberg}.

Let $GL(R,I)$ denote the kernel of the group homomorphism $GL(R) \to GL(R/I)$ induced by the quotient map $R \to R/I$.  Denote by $E(R,I)$ the subgroup of elementary matrices generated by matrices that differ from the identity by at most one off-diagonal element of $I$.  Let $[GL(R), E(R,I)] \subseteq GL(R,I)$ denote the subgroup generated by all elements of the form
\[
ghg^{-1}h^{-1}, \quad g\in GL(R), h\in E(R,I).
\]
\begin{definition}
$K_1(R,I) = GL(R,I) / [GL(R), E(R,I)]$.
\end{definition}
In the case when $R$ is the ring $\mathcal L$ of bounded linear operators on a Hilbert space and $I$ is the ideal $\mathcal L^1$ of trace class operators, the Fredholm determinant induces a surjective group homomorphism
\[
\det : K_1(\mathcal L, \mathcal L^1) \to \mathbf C^\times.
\]
In fact, $K_1(\mathcal L, \mathcal L^1) = V \oplus C^\times$, where $V$ is the additive group of a vector space of uncountable linear dimension, and $\det$ is projection onto the second factor \cite{Anderson-Vaserstein}.

Now we describe Milnor's second algebraic $K$-group.  The Steinberg group $St_n(R)$ is the group with generators $x_{ij}(a)$ for $a \in R$, $i\neq j$, $1 \leq i,j \leq n$, and relations
\begin{align*}
& x_{ij}(a) x_{ij}(b) = x_{ij}(a+b) & \\
& x_{ij}(a) x_{kl}(b) x_{ij}(a)^{-1} x_{kl}(b)^{-1} = 1, & j \neq k, i \neq l\\
& x_{ij}(a) x_{jk}(b) x_{ij}(a)^{-1} x_{jk}(b)^{-1} = x_{ik}(ab), & i,j,k \, \text{distinct}
\end{align*}
Since the generators of the group of elementary matrices $E_n(R)$ satisfy these same relations, we have a map $St_n(R) \to E_n(R)$, and hence a natural map $\phi$ on the inductive limits $St(R)$ and $E(R).$

\begin{definition}
$K_2(R) = \ker (\phi : St(R) \to E(R))$.
\end{definition}

In fact, $St(R)$ is the universal central extension of $E(R)$, and $K_2(R)$ is isomorphic to the second homology group $H_2(E(R), \mathbf Z)$ \cite[Theorem 4.2.7 and Corollary 4.2.10]{Rosenberg}.

Next we define specific elements in $K_2(R)$ known as Steinberg symbols, which turn out to be quite useful.  For example, $K_2$ of a field is generated by its Steinberg symbols.  For any $a \in R^\times$ and $i \neq j$, define elements $w_{ij}(a), h_{ij}(a) \in St(R)$ by
\[
w_{ij}(a) = x_{ij}(a) x_{ji}(-a^{-1}) x_{ij}(a) , \quad h_{ij}(a) = w_{ij}(a) w_{ij}(-1).
\]
Then we have
\[
\phi(w_{12}(a))  = \begin{pmatrix} 0 & a \\ -a^{-1} & 0 \end{pmatrix}
\quad \text{and} \quad
\phi(h_{12}(a))  = \begin{pmatrix} a & 0 \\ 0 & a^{-1} \end{pmatrix}.
\]

\begin{definition}
For commuting units $a$ and $b$ in $R$, the Steinberg symbol $\{ a, b \} \in K_2(R)$ is
\[
\{ a, b \} = h_{12}(a) h_{13}(b) h_{12}(a)^{-1} h_{13}(b)^{-1}.
\]
\end{definition}

\begin{proposition}[see Lemma 4.2.14 and Theorem 4.2.17 of \cite{Rosenberg}] \label{Steinberg-relations} For any units $a$, $a_1$, $a_2$, and $b$ in $R$ such that $b$ commutes with $a$, $a_1$, and $a_2$, we have
\begin{enumerate}
\item $\{ a,b \} = \{ a,b \}^{-1}$
\item $\{ a_1 a_2, b \} = \{ a_1, b \} \{ a_2, b \}$
\item $\{ a, 1-a \} = 1$, whenever $1-a$ is invertible
\end{enumerate}
\end{proposition}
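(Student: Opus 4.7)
These three relations are classical facts about Steinberg symbols, and my plan is to verify each in turn by direct manipulation in the Steinberg group $St(R)$, following the standard approach recorded in Milnor's monograph and in the cited Lemma 4.2.14 and Theorem 4.2.17 of Rosenberg.

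For the skew symmetry relation, the key tool is the conjugation action of the Weyl-type elements $w_{ij}(1)$ on the generators: $w_{ij}(1) x_{kl}(c) w_{ij}(1)^{-1}$ is, up to an explicit sign, an elementary generator obtained by applying the transposition $(ij)$ to the indices $k,l$. This extends to a conjugation formula for the elements $h_{kl}(c)$, and combined with the hypothesis $ab = ba$, a short rearrangement of the $h_{12}$ and $h_{13}$ factors in $\{a,b\}\{b,a\}$ collapses the expression to the identity of $St(R)$.

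For bimultiplicativity, the essential ingredient is the fact that the cocycle $c(a_1, a_2) := h_{ij}(a_1 a_2) h_{ij}(a_2)^{-1} h_{ij}(a_1)^{-1}$ lies in the center of $St(R)$. When we expand $\{a_1 a_2, b\}$ via the definition, this central correction commutes past $h_{13}(b)$ and is therefore killed by the outer commutator, leaving $\{a_1, b\}\{a_2, b\}$. The commuting hypothesis on $b$ with each of $a_1, a_2$ ensures the intermediate products are well defined as units and that the relevant rearrangements are legitimate.

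Relation (3), the Steinberg identity $\{a, 1-a\} = 1$, is the deepest and is essentially Matsumoto's relation. My plan is to exploit the invertibility of $1-a$ to insert the additive identity $a + (1-a) = 1$ inside a suitable string of $x_{ij}$ and $w_{ij}$ elements, using auxiliary formulas such as $w_{ij}(u) w_{ij}(v) = h_{ij}(-uv^{-1})$ for commuting $u,v$, and then to collapse the result using the defining relations of $St(R)$. The main obstacle lies here: identities (1) and (2) are formal once the conjugation and centrality facts are in hand, but (3) has no analogue in $E(R)$ and requires a delicate sequence of substitutions --- it is precisely the extra identity that distinguishes $K_2(R)$ from being trivial. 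Since all three relations are already fully proved in the cited references, my proposal is to quote Rosenberg directly, at most sketching the Matsumoto-style manipulation for (3).
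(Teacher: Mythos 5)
Your proposal is correct and takes essentially the same route as the paper: the paper offers no argument of its own for these relations, citing Lemma 4.2.14 and Theorem 4.2.17 of Rosenberg exactly as you propose to do, and your sketches (Weyl-element conjugation for skew symmetry, centrality for bimultiplicativity, the Matsumoto-style manipulation for $\{a,1-a\}=1$) are the standard arguments behind those references. Note only that item (1) as printed reads $\{a,b\}=\{a,b\}^{-1}$, evidently a typo for the skew symmetry $\{a,b\}=\{b,a\}^{-1}$ that you prove and that the paper later uses.
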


The boundary map $\partial : K_{2}(R/I) \xrightarrow{} K_1(R,I)$ is defined as follows.  Any element $u \in K_2(R/I)$ can be expressed in terms of generators $x_{ij}(u_k) \in St(R/I)$.  We obtain an element $r \in St(R)$ by lifting each $u_k \in R/I$ to an element $r_k \in R$.  Then $\phi(r) \in GL(R,I)$, and we define $\partial (u)$ to be the image of $\phi(r)$ in $K_1(R,I)$.  One then checks that this is independent of the choice of lifts.

Now we specialize to the case $R = \mathcal L$ and $I = \mathcal L^{1}$.  

\begin{definition}
For any commuting units $a$ and $b$ in $\mathcal L / \mathcal L^1$, 
the determinant invariant $d(a,b)$ is the nonzero number
\[
d(a,b) = \det \partial \{ a,b \}.
\]
In particular, $d(a,b)$ satisfies the relations in Proposition \ref{Steinberg-relations}.
\end{definition}

In calculating the determinant invariant, $a \in \mathcal L / \mathcal L^1$ is lifted to an operator $A \in \mathcal L$, which is necessarily Fredholm, and $a^{-1}$ is lifted to a parametrix $Q$ of $A$ modulo trace class.  Here, and in the sequel, a parametrix modulo an ideal is an inverse modulo that ideal.  Thus, $I-AQ$ and $I-QA$ are trace class operators.  When $a$ and $b$ have invertible lifts, we have the following observation of Brown \cite{Brown}.

\begin{proposition}
Let $a$ and $b$ be commuting units in $\mathcal L / \mathcal L^1$.  If $a$ and $b$ have invertible lifts $A, B \in \mathcal L$, then $d(a,b) = \det A B A^{-1} B^{-1}$.  In particular, the determinant of the multiplicative commutator depends only on the Steinberg symbol $\{ a,b \}$.
\end{proposition}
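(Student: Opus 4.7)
The plan is to compute $\partial\{a,b\}$ directly using the invertible lifts $A, B$, and then evaluate the Fredholm determinant of the resulting matrix. Since $A$ and $B$ are genuine units in $\mathcal L$, they lift $a$ and $b$ and additionally allow us to form the elements $h_{12}(A), h_{13}(B) \in St(\mathcal L)$ that project to $h_{12}(a), h_{13}(b) \in St(\mathcal L / \mathcal L^1)$. So a canonical lift of the Steinberg symbol generator is
\[
s := h_{12}(A) h_{13}(B) h_{12}(A)^{-1} h_{13}(B)^{-1} \in St(\mathcal L),
\]
and by the definition of the boundary map recalled above, $\partial\{a,b\}$ is represented by $\phi(s) \in GL(\mathcal L, \mathcal L^1)$.

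Next I would apply the explicit formulas for the images of the $h_{ij}$. Namely,
\[
\phi(h_{12}(A)) = \begin{pmatrix} A & 0 & 0 \\ 0 & A^{-1} & 0 \\ 0 & 0 & 1 \end{pmatrix}, \qquad \phi(h_{13}(B)) = \begin{pmatrix} B & 0 & 0 \\ 0 & 1 & 0 \\ 0 & 0 & B^{-1} \end{pmatrix},
\]
both being diagonal but with noncommuting entries on the diagonal. Multiplying the four diagonal factors respecting the order of operator composition in each slot gives
\[
\phi(s) = \mathrm{diag}(ABA^{-1}B^{-1},\, 1,\, 1,\, 1,\, \ldots).
\]
Since $a$ and $b$ commute in $\mathcal L/\mathcal L^1$, we have $ABA^{-1}B^{-1} - I \in \mathcal L^1$, so $\phi(s)$ indeed lies in $GL(\mathcal L, \mathcal L^1)$ as required.

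Finally, one applies the Fredholm determinant, which on a block diagonal element is the product of the determinants on each block. The only nontrivial block is the first, giving
\[
d(a,b) = \det \partial\{a,b\} = \det \phi(s) = \det ABA^{-1}B^{-1}.
\]
There is no real obstacle here: the argument is essentially a bookkeeping exercise with the Steinberg relations, and the only point requiring a moment's care is making sure that the diagonal matrix products are composed in the correct order so that the noncommutativity of $A$ and $B$ produces the multiplicative commutator rather than collapsing to the identity. The last sentence of the proposition, about dependence only on $\{a,b\}$, then follows because $d(a,b)$ was defined in a manner manifestly depending only on $\{a,b\} \in K_2(\mathcal L/\mathcal L^1)$.
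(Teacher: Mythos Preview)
Your proof is correct and follows essentially the same approach as the paper's: both lift the Steinberg symbol using the invertible operators $A$ and $B$ (and hence $A^{-1}$, $B^{-1}$), write out the explicit diagonal matrices $\phi(h_{12}(A))$ and $\phi(h_{13}(B))$, and multiply to obtain $\mathrm{diag}(ABA^{-1}B^{-1},1,1)$ before taking the Fredholm determinant. The only difference is presentational---the paper displays the four $3\times 3$ matrices and their product, while you summarize the diagonal multiplication in words---but the content is identical.
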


\begin{proof}
Since $A$ and $B$ are invertible, $a^{-1}$ and $b^{-1}$ can be lifted to $A^{-1}$ and $B^{-1}$.  Thus
\begin{align*}
d(a,b)
& = \det \partial \, h_{12}(\sigma(A)) h_{13}(\sigma(B)) h_{12}(\sigma(A))^{-1} h_{13}(\sigma(B))^{-1} \\
& = \det \begin{pmatrix} A & 0 & 0 \\ 0 & A^{-1} & 0 \\ 0 & 0 & I \end{pmatrix}
\begin{pmatrix} B & 0 & 0 \\ 0 & I & 0 \\ 0 & 0 & B^{-1} \end{pmatrix}
\begin{pmatrix} A & 0 & 0 \\ 0 & A^{-1} & 0 \\ 0 & 0 & I \end{pmatrix}^{-1}
\begin{pmatrix} B & 0 & 0 \\ 0 & I & 0 \\ 0 & 0 & B^{-1} \end{pmatrix}^{-1} \\
& = \det A B A^{-1} B^{-1}. \qedhere
\end{align*}
\end{proof}

\begin{example}
Suppose $A = \exp \alpha, B = \exp \beta$ for operators $\alpha$ and $\beta$ with $[\alpha, \beta] \in \mathcal L^1$. Then
\[
d(a,b) = \exp \tr \, [\alpha, \beta]
\]
by the Helton-Howe-Pincus formula.  For example, if $f$ and $g$ are smooth functions on the the unit circle, then the Toeplitz operators $T_{e^f}$ and $T_{e^g}$ are Fredholm and commute modulo trace class.  One can use the Berger-Shaw formula to show that
\[
d(T_{e^f} + \mathcal L^1, T_{e^g} + \mathcal L^1) = \exp \frac{1}{2\pi i} \int f \, dg.
\]
\end{example}

\begin{lemma} \label{lemma:di-triviality}
For any $a \in R^\times$, we have $\{ a, 1 \} = \{ 1, a \} = 1 \in K_2(R)$. Hence, for any invertible $a \in \mathcal L / \mathcal L^1$, we have $d(a,1) = d(1,a) = 1$.
\end{lemma}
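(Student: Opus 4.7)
The plan is to verify this directly from the definitions. The cleanest approach is to apply bimultiplicativity of the Steinberg symbol: using relation (2) of Proposition \ref{Steinberg-relations}, I would write $\{1,b\} = \{1 \cdot 1, b\} = \{1,b\}\{1,b\}$ and cancel to obtain $\{1,b\} = 1$ in $K_2(R)$. The analogous multiplicativity in the second variable (which follows from combining the skew-symmetry relation (1) with the multiplicativity relation (2), or more simply from the standard fact that Steinberg symbols are bimultiplicative) then gives $\{a,1\} = 1$ by the same argument.

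Alternatively — and this avoids any appeal to skew-symmetry — one can compute at the level of $St(R)$ that $h_{ij}(1) = 1$. Using the defining formulas $h_{ij}(1) = w_{ij}(1)w_{ij}(-1)$ and $w_{ij}(a) = x_{ij}(a)x_{ji}(-a^{-1})x_{ij}(a)$, one expands
\[
h_{ij}(1) = x_{ij}(1)x_{ji}(-1)x_{ij}(1) \cdot x_{ij}(-1)x_{ji}(1)x_{ij}(-1)
\]
and telescopes using $x_{ij}(a)x_{ij}(b) = x_{ij}(a+b)$ together with $x_{ij}(0) = 1$, arriving at $h_{ij}(1) = 1$. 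Substituting into the definition $\{a,b\} = h_{12}(a)h_{13}(b)h_{12}(a)^{-1}h_{13}(b)^{-1}$ immediately yields $\{a,1\} = \{1,a\} = 1$.

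The consequence for the determinant invariant is then automatic, since $\partial \colon K_2(\mathcal L/\mathcal L^1) \to K_1(\mathcal L, \mathcal L^1)$ and $\det \colon K_1(\mathcal L, \mathcal L^1) \to \mathbf{C}^\times$ are group homomorphisms, whence $d(a,1) = \det \partial \{a,1\} = 1$ and similarly $d(1,a) = 1$. There is no substantive obstacle here; the lemma is a formal corollary of the group-theoretic definition of the Steinberg symbol, included for later reference when manipulating Steinberg symbols involving trivial entries.
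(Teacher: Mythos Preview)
Your proposal is correct. Your ``alternative'' approach---computing $h_{ij}(1)=1$ directly by telescoping the product $x_{ij}(1)x_{ji}(-1)x_{ij}(1)x_{ij}(-1)x_{ji}(1)x_{ij}(-1)$ and then substituting into the definition of the symbol---is precisely the paper's proof (the paper simply asserts $h_{13}(1)=w_{13}(1)w_{13}(-1)=1$ without writing out the telescoping). Your primary approach via bimultiplicativity, $\{1,b\}=\{1\cdot 1,b\}=\{1,b\}^2$ hence $\{1,b\}=1$, is an equally valid and slightly more conceptual shortcut; it trades the explicit Steinberg-group computation for an appeal to Proposition~\ref{Steinberg-relations}(2), whose proof of course already contains the relevant manipulations. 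Either way the deduction for $d$ is immediate, as you note.
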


\begin{proof}
We note that $h_{13}(1) = w_{13}(1) w_{13}(-1) = 1$, so
\[
\{ a,1 \} = h_{12}(a) h_{13}(1) h_{12}(a)^{-1} h_{13}(1)^{-1} = 1,
\]
and similarly for $\{1,a\}$.
\end{proof}

\begin{lemma} \label{lemma:di-direct-sum}
For any commuting units $a_i$ and $b_i$ in $\mathcal L / \mathcal L^1$, $i=1,2$, we have
\[
d(a_1 \oplus a_2, b_1 \oplus b_2) = d(a_1, b_1) \, d(a_2, b_2).
\]
\end{lemma}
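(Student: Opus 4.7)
The plan is to derive this identity from the functoriality of the Steinberg symbol and the boundary map with respect to block-diagonal inclusions, together with the additivity of algebraic $K$-theory on products of unital rings.

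Fix an isomorphism $\mathscr H \cong \mathscr H \oplus \mathscr H$. The block-diagonal assignment $(A_1, A_2) \mapsto A_1 \oplus A_2$ is a unital ring homomorphism $\iota : \mathcal L \times \mathcal L \to \mathcal L$ that preserves the trace class ideal, and so descends to a unital ring homomorphism $\bar\iota : (\mathcal L/\mathcal L^1) \times (\mathcal L/\mathcal L^1) \to \mathcal L/\mathcal L^1$ sending $(a_1, a_2) \mapsto a_1 \oplus a_2$. By the additivity of algebraic $K$-theory on products of unital rings, there is a natural isomorphism $K_2((\mathcal L/\mathcal L^1) \times (\mathcal L/\mathcal L^1)) \cong K_2(\mathcal L/\mathcal L^1) \oplus K_2(\mathcal L/\mathcal L^1)$, and under this identification the Steinberg symbol $\{(a_1, a_2), (b_1, b_2)\}$ corresponds to $(\{a_1, b_1\}, \{a_2, b_2\})$. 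This follows from the functoriality of the Steinberg symbol together with the fact that the coordinate projections $\mathcal L/\mathcal L^1 \times \mathcal L/\mathcal L^1 \to \mathcal L/\mathcal L^1$ induce the corresponding coordinate projections on $K_2$.

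Pushing this through $\bar\iota_*$, and using that the two unital inclusions $\mathcal L/\mathcal L^1 \to (\mathcal L/\mathcal L^1)^{\times 2}$ compose with $\bar\iota$ to give the stabilization maps $a \mapsto a \oplus 1$ and $a \mapsto 1 \oplus a$, each of which induces the identity on $K_2(\mathcal L/\mathcal L^1)$ after Morita identification, we conclude that $\{a_1 \oplus a_2, b_1 \oplus b_2\} = \{a_1, b_1\} \cdot \{a_2, b_2\}$ in $K_2(\mathcal L/\mathcal L^1)$. Naturality of the boundary map with respect to $\iota$ now shows that $\partial \{a_1 \oplus a_2, b_1 \oplus b_2\}$ is represented in $K_1(\mathcal L, \mathcal L^1)$ by a block-diagonal element whose blocks represent $\partial \{a_1, b_1\}$ and $\partial \{a_2, b_2\}$ respectively. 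Applying the Fredholm determinant, which satisfies $\det(N_1 \oplus N_2) = \det N_1 \cdot \det N_2$ for $N_i \in I + \mathcal L^1$, yields $d(a_1 \oplus a_2, b_1 \oplus b_2) = d(a_1, b_1) \, d(a_2, b_2)$.

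The main technical point is justifying the $K$-theoretic additivity step, i.e.\ that the Steinberg symbol in a product ring decomposes as the pair of individual Steinberg symbols. This is standard but requires some care with the universal property of the Steinberg group. As a purely concrete alternative, one could instead combine the bimultiplicativity in Proposition \ref{Steinberg-relations} with the two subclaims $d(a \oplus 1, b \oplus 1) = d(a,b)$ and $d(a \oplus 1, 1 \oplus b) = 1$: the first is a stabilization assertion, and the second can be verified directly from the Carey--Pincus formula \eqref{eq:Lefschetz-z1-z2}, since the Koszul complex of the commuting lifts $A \oplus I$ and $I \oplus B$ splits and is acyclic, and the resulting restrictions are all identity maps.
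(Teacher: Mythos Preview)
Your argument is correct in outline, but it takes a far more abstract route than the paper and buries the actual content inside machinery that is not needed. The paper's proof is three lines and entirely concrete: lift $a_i$ to Fredholm operators $A_i$ and $a_i^{-1}$ to parametrices $Q_i$; then $A_1\oplus A_2$ and $Q_1\oplus Q_2$ are lifts of $a_1\oplus a_2$ and its inverse, and similarly for $b$; the expression $\phi(r)$ defining $\partial\{a_1\oplus a_2,b_1\oplus b_2\}$ is therefore block-diagonal, and the Fredholm determinant is multiplicative over direct sums. That is all.

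Your penultimate sentence --- that naturality of $\partial$ with respect to $\iota$ produces a block-diagonal representative whose blocks are $\partial\{a_i,b_i\}$ --- \emph{is} the paper's proof. Everything preceding it (additivity of $K_2$ on product rings, the Morita identification, the claim that the stabilizations $a\mapsto a\oplus 1$ and $a\mapsto 1\oplus a$ induce the identity on $K_2(\mathcal L/\mathcal L^1)$) establishes an equality $\{a_1\oplus a_2,b_1\oplus b_2\}=\{a_1,b_1\}\cdot\{a_2,b_2\}$ at the level of $K_2$ that you never actually use once you pass to the block-diagonal representative. Moreover, the stabilization claim is itself nontrivial: that the endomorphism $a\mapsto a\oplus 1$ of $\mathcal L/\mathcal L^1$ (via the fixed identification $\mathscr H\cong\mathscr H\oplus\mathscr H$) induces the identity on $K_2$ is true but requires an argument, whereas the lemma does not. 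Your closing ``concrete alternative'' goes even further afield, invoking the Carey--Pincus formula \eqref{eq:Lefschetz-z1-z2}, which is a substantially deeper result than the one being proved.
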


\begin{proof}
In calculating the boundary map, $a_i$ is lifted to a Fredholm operator $A_i$, and $a_i^{-1}$ is lifted to any parametrix $Q_i$ of $A_i$ modulo trace class.
Then we can lift $a_1 \oplus a_2$ to $A_1 \oplus A_2$, and $(a_1 \oplus a_2)^{-1}$ to $Q_1 \oplus Q_2$.
Similarly for $b_i$, $b_1 \oplus b_2$, and $(b_1 \oplus b_2)^{-1}$.
The result then follows since determinants are multiplicative over direct sums.
\end{proof}

\subsection{Joint torsion of commuting operators} \label{subsec:preliminaries-jt}

Let $(V_\bullet, d_\bullet)$ be an exact sequence of finite dimensional vector spaces,
\[
0 \xrightarrow{} V_n \xrightarrow{d_n} V_{n-1} \xrightarrow{d_{n-1}} V_{n-2} \xrightarrow{} \cdots \xrightarrow{} V_0 \to 0.
\]
Denote by $ \det V_k$ the top exterior power $\Lambda^{\dim V_k} V_k$, and define the determinant line
\[
\det V_\bullet = \det V_{n}^* \otimes \det V_{n-1} \otimes \det V_{n-2}^* \otimes \dots
\]
For each $k$, pick a nonzero element
$t_k \in \Lambda^{\mathrm{rank} \, d_k}V_{k}$
such that $d_k t_k \neq 0$.
By exactness, $ d_{k} t_{k} \wedge t_{k-1} $ is a nonzero element of $\det V_{k-1}$, so we make the following definition.
\begin{definition}
The torsion $\tau(V_\bullet, d_\bullet)$ of the complex $(V_\bullet, d_\bullet)$ is the volume element
\[
\tau(V_\bullet, d_\bullet) = (t_n)^* \otimes (d_n t_n \wedge t_{n-1}) \otimes (d_{n-1} t_{n-1} \wedge t_{n-2})^* \otimes \dots \in \det V_\bullet
\]
This is nonzero and is independent of the choices of the $t_k$ \cite{Knudsen-Mumford}.  Hence $\tau(V_\bullet, d_\bullet)$ defines a canonical generator of $\det V_\bullet$.
\end{definition}

For a vector space $W$, we will make frequent use of the isomorphism $\Lambda^k W^* \otimes \Lambda^k W \cong \mathbf C$ given by
\begin{equation} \label{eq:duality}
(v_1^* \wedge v_2^* \wedge \dots \wedge v_k^*) \otimes (w_1 \wedge w_2 \wedge \dots \wedge w_k) \mapsto \det \left( v_i^*(w_j) \right).
\end{equation}

\begin{example} \label{example:torsion=determinant}
The torsion of an isomorphism of a finite dimensional vector space is its determinant.
\end{example}

For a collection of commuting operators $A=(A_1,\dots,A_n)$ on a vector space $\mathscr H$, Carey and Pincus \cite{Joint} and Kaad \cite{Kaad} have defined invariants known as joint torsion.  Let us review their constructions.  The Koszul complex $K_\bullet(A)$ is the chain complex with
\[
K_i(A) = \mathscr H \otimes \Lambda^i \mathbf{C}^n
\]
and differential $d_i:K_i(A) \to K_{i-1}(A)$ given by
\begin{equation*}
d_i = \sum_{k=1}^n A_k \otimes \varepsilon_k^*
\end{equation*}
where $\varepsilon_k:\Lambda^i \mathbf{C}^n \to \Lambda^{i+1} \mathbf{C}^n$ is the operation of exterior multiplication by the unit vector $e_k$, and $\varepsilon_k^*$ is its adjoint.
The $n$-tuple $A$ is said to be Fredholm if $(K_\bullet(A), d_\bullet)$ has finite dimensional homology.
We also have maps $\iota_k: \Lambda^i \mathbf{C}^n \to \Lambda^i \mathbf{C}^{n+1}$ induced by the inclusion
\[
\mathbf C^n \to \mathbf C^{n+1}, \quad (a_1,\dots,a_n) \mapsto (a_1,\dots,a_{k-1},0,a_k,\dots,a_n).
\]

For any $j \in \{ 1, \dots, n\}$, let $j(A) = (A_1, \dots, \hat A_j, \dots, A_n)$.  Since $A=(A_1,\dots,A_n)$ is commutative, each $A_j$ defines a morphism of chain complexes 
$A_j := A_j \otimes 1: K_\bullet(j(A)) \to K_\bullet(j(A))$.  
The mapping cone of this morphism is isomorphic to the Koszul complex $K_\bullet(A)$ via the isomorphism
\begin{equation*} \label{eq:iso-Koszul}
\begin{pmatrix} \iota_j^* \varepsilon_j^* \\ \iota_j^* \end{pmatrix} : \Lambda^k \mathbf{C}^n \xrightarrow{\sim} \Lambda^{k-1} \mathbf{C}^{n-1} \oplus \Lambda^k \mathbf{C}^{n-1}.
\end{equation*}
We thus obtain a triangle of chain complexes
\[
K_\bullet(j(A)) \to K_\bullet(j(A)) \to K_\bullet(A) \to
\]
and hence a long exact sequence in homology
\[
\mathcal E_j: 0 \to H_n(A) \to H_{n-1}(j(A)) \xrightarrow{} H_{n-1}(j(A)) \to H_{n-1}(A) \to \dots
\]

The torsion vector 
$\tau(\mathcal E_j) \in \det H_\bullet(A) \otimes \det H_\bullet(j(A))^* \otimes \det H_\bullet(j(A))$.
Here, and in the sequel, for a sequence of homology spaces $H_\bullet(A)$, we use the notation
\[
\det H_\bullet(A) = \det H_n(A)^* \otimes \det H_{n-1}(A) \otimes \dots
\]
We regard $\tau(\mathcal E_j) \in \det H_\bullet(A)$ using the isomorphism in \eqref{eq:duality} applied to the line
$\det H_\bullet(j(A))$.
Carrying out the same process for some $i \neq j$ yields an exact sequence $\mathcal E_i$.  If $i(A)$ and $j(A)$ are Fredholm, then $\mathcal E_i$ and $\mathcal E_j$ consist of finite dimensional vector spaces, whence we can form two torsion vectors 
$\tau(\mathcal E_i), \tau(\mathcal E_j) \in \det H_\bullet(A)$.  
Then $\tau(\mathcal E_i) \otimes \tau(\mathcal E_j)^*$ can be identified with a nonzero scalar, which up to a sign is the joint torsion defined by Carey and Pincus for $n=2$ \cite{Joint} and by Kaad for $n\geq 2$ \cite{Kaad}.

Let us describe this construction more explicitly in the case $n=2$.  Thus, let $A_1 = A$ and $A_2 = B$ be two commuting Fredholm operators.  Upon choosing bases for the exterior algebras, we have
\[
K_\bullet(A): \mathscr H \xrightarrow{A} \mathscr H, \quad
K_\bullet(B): \mathscr H \xrightarrow{B} \mathscr H
\]
\[
K_\bullet(A,B) : \mathscr H \xrightarrow{\left( \begin{smallmatrix} -B \\ A \end{smallmatrix} \right) } \mathscr H^2 \xrightarrow{ \left( \begin{smallmatrix} A & B \end{smallmatrix} \right) } \mathscr H
\]
Then $H_0(A) = \coker \, A$, $H_1(A) = \ker A$, $H_2(A,B) = \ker A \cap \ker B$, $H_0(A,B) = \mathscr H / (A \mathscr H + B \mathscr H)$, and
\[
H_1(A,B) = \frac{ \{ (y,z) \, | \, Ay+Bz = 0 \} }{ \{ (-Bx,Ax) \, | \, x \in \mathscr H \} }.
\]
The two exact sequences $\mathcal E_A$ and $\mathcal E_B$ are given by
\begin{multline*}
\mathcal E_{A}:
0 \xrightarrow{} (\ker A \, \cap \, \ker B) \xrightarrow{\iota} \ker B \xrightarrow{A} \ker B \xrightarrow{\iota_{2*}} H_1(A,B) \to \\
\xrightarrow{\pi_{1*}} \coker \, B \xrightarrow{A} \coker \, B \xrightarrow{\pi} \mathscr H / (A\mathscr H + B\mathscr H) \xrightarrow{} 0
\end{multline*}
\begin{multline*}
\mathcal E_{B}:
0 \xrightarrow{} (\ker A \, \cap \, \ker B) \xrightarrow{-\iota} \ker A \xrightarrow{B} \ker A \xrightarrow{\iota_{2*}} H_1(A,B) \to \\
\xrightarrow{\pi_{1*}} \coker \, A \xrightarrow{B} \coker \, A \xrightarrow{\pi} \mathscr H / (A\mathscr H + B\mathscr H) \xrightarrow{} 0
\end{multline*}
The map $\iota_{k*}$ is induced by inclusion into the $k$-th coordinate, and $\pi_{k*}$ is induced by projection onto the $k$-th coordinate.  Here, we have chosen signs to simplify formulas.

\begin{definition} \cite[Definition 3]{Joint}
The joint torsion $\tau(A,B)$ of $A$ and $B$ is the nonzero scalar
\[
\tau(A,B) = (-1)^{\lambda(A,B)} \, \tau(\mathcal E_A) \otimes \tau(\mathcal E_B)^*
\]
where $\lambda(A,B)$ depends on the homology spaces according to
\begin{align*}
\lambda(A,B) 
={}& \dim (\ker A \cap \ker B) \cdot \big( \dim \ker A + \dim \ker B \big) + \\
{}& + \dim (\mathscr H / (A \mathscr H + B \mathscr H)) \cdot ( \dim \coker \, A + \dim \coker \, B ).
\end{align*}
\end{definition}

\begin{lemma} \label{lemma:mult-lefschetz}
If $H_i(A,B) = 0$ for $i=0,1,2$, then we have
\[
\tau(A,B) = \frac{\det B|_{\ker A}}{\det B|_{\coker \, A}} \, \frac{\det A|_{\coker \, B}}{\det A|_{\ker B}}.
\]
In particular, $\tau(A,I) = \tau(I,A) = 1$.
\end{lemma}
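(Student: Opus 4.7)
The plan is to show that the hypothesis $H_i(A,B) = 0$ collapses the long exact sequences $\mathcal E_A$ and $\mathcal E_B$ into pairs of isomorphisms, to compute each torsion vector via Example \ref{example:torsion=determinant}, and then to combine.

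First I would observe that, under the hypothesis, the sequence
\[
\mathcal E_A:\; 0 \to H_2(A,B) \to \ker B \xrightarrow{A} \ker B \to H_1(A,B) \to \coker \, B \xrightarrow{A} \coker \, B \to H_0(A,B) \to 0
\]
forces $A \colon \ker B \to \ker B$ and $A \colon \coker \, B \to \coker \, B$ to be isomorphisms. The same reasoning applied to $\mathcal E_B$ shows that $B \colon \ker A \to \ker A$ and $B \colon \coker \, A \to \coker \, A$ are also isomorphisms.

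Second I would compute $\tau(\mathcal E_A)$ and $\tau(\mathcal E_B)$ directly. Since $H_\bullet(A,B) = 0$, the line $\det H_\bullet(A,B)$ is canonically $\mathbf C$, and the remaining factor $\det H_\bullet(B)^* \otimes \det H_\bullet(B)$ splits into $\det \ker B \otimes (\det \ker B)^* \otimes (\det \coker \, B)^* \otimes \det \coker \, B$, which identifies canonically with $\mathbf C$ via the duality pairing \eqref{eq:duality}. Choosing the torsion data $t_k$ to be a top wedge on each nonzero space of $\mathcal E_A$ and $t_k = 1$ on each zero space, and using Example \ref{example:torsion=determinant} on the two isomorphism pieces, the definition of torsion unwinds to
\[
\tau(\mathcal E_A) = \frac{\det A|_{\coker \, B}}{\det A|_{\ker B}}, \qquad \tau(\mathcal E_B) = \frac{\det B|_{\coker \, A}}{\det B|_{\ker A}}.
\]

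Third, the vanishing of $H_2(A,B)$ and $H_0(A,B)$ gives $\ker A \cap \ker B = 0$ and $\mathscr H/(A\mathscr H + B\mathscr H) = 0$, so both summands of $\lambda(A,B)$ vanish. The product $\tau(\mathcal E_A) \otimes \tau(\mathcal E_B)^*$ then reduces to the ratio $\tau(\mathcal E_A)/\tau(\mathcal E_B)$, which rearranges into the formula in the statement. For the final claim $\tau(A,I) = \tau(I,A) = 1$, I would first verify $H_i(A,I) = 0$ for all $i$ directly from the Koszul complex (immediate since $I$ is invertible), then apply the main formula; every determinant is an empty product equal to $1$. The principal bookkeeping hurdle is the second step: tracking the alternating sign convention defining $\det V_\bullet$ together with the dualities between the four surviving determinant factors, after which the computation is mechanical.
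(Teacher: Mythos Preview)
Your proposal is correct and follows essentially the same approach as the paper: both observe that the vanishing of $H_\bullet(A,B)$ splits $\mathcal E_A$ and $\mathcal E_B$ into pairs of isomorphisms whose torsions are determinants by Example~\ref{example:torsion=determinant}, and then assemble the alternating product. One small slip in your last paragraph: for $\tau(A,I)$ the factors $\det I|_{\ker A}$ and $\det I|_{\coker A}$ are not empty products when $A$ is not invertible, but they are determinants of identity maps and hence still equal to $1$.
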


\begin{proof}
Since $H_i(A,B)=0$, the exact sequence $\mathcal E_{A}$ breaks up into the isomorphisms
\[
A|_{\ker B} : \ker B \to \ker B \quad \text{and} \quad A|_{\coker \, B} : \coker \, B \to \coker \, B
\]
and $\mathcal E_{B}$ breaks up as
\[
B|_{\ker A} : \ker A \to \ker A \quad \text{and} \quad B|_{\coker \, A} : \coker \, A \to \coker \, A.
\]
Joint torsion is the alternating product of the torsion vectors of these isomorphisms, which are simply the determinants by Example \ref{example:torsion=determinant}.
\end{proof}

\begin{example}
Joint torsion generalizes the index of a Fredholm operator.  In fact,
\begin{align*}
\tau(A, \exp B)
& = \frac{\det \exp B|_{\ker A}}{\det \exp B|_{\coker \, A}}  \\
& = \exp \tr (B|_{\ker A} - B|_{\coker \, A} )
\end{align*}
by Lemma \ref{lemma:mult-lefschetz}.  Thus, $\tau(A, \exp B)$ is the exponential of the Lefschetz number of $B$ as an endomorphism of the chain complex $K_\bullet(A)$.  By taking $B=I$, we find
$\ind \, A = \log \tau(A,eI)$.
\end{example}

\subsection{Joint torsion of almost commuting operators} \label{subsec:preliminaries-almost}

We would like to calculate joint torsion for operators $A$ and $B$ that do not necessarily commute with one another.  The difficulty is that there is no longer a well-defined Koszul complex $K_\bullet(A,B)$.  Carey and Pincus circumvent this problem in the case $n=2$ by introducing auxiliary operators $C$ and $D$ that are perturbations of the original operators $A$ and $B$.  Thus, consider two Fredholm operators $A$ and $B$ with $[A,B] \in \mathcal L^1$.  
Further, assume the existence of operators $C$ and $D$ such that $AB=CD, A-D \in \mathcal L^1$, and $B-C \in \mathcal L^1$.  See Remark \ref{remark:auxiliary}.  Then we have the following commutative diagram:
\begin{equation*}
\begin{CD}
\mathscr H @>B>> \mathscr H \\
@VDVV @VVAV \\
\mathscr H @>C>> \mathscr H
\end{CD}
\end{equation*}

As in the commuting case, we can consider the mapping cone $K_\bullet(A,B,C,D)$ of the vertical chain map $(A,D)$.  Explicitly,
\[
K_\bullet(A,B,C,D) : \mathscr H \xrightarrow{\left( \begin{smallmatrix} -B \\ D \end{smallmatrix} \right) } \mathscr H^2 \xrightarrow{ \left( \begin{smallmatrix} A & C \end{smallmatrix} \right) } \mathscr H
\]
This yields a triangle of modified Koszul complexes, and hence the long exact sequence in homology:
\begin{multline*}
\mathcal E_{A,D}:
0 \xrightarrow{} (\ker B \, \cap \, \ker D) \xrightarrow{\iota} \ker B \xrightarrow{D} \ker C \xrightarrow{\iota_{2*}} H_1(A,B,C,D) \to \\
\xrightarrow{\pi_{1*}} \coker \, B \xrightarrow{A} \coker \, C \xrightarrow{\pi} \mathscr H / (A\mathscr H + C\mathscr H) \xrightarrow{} 0
\end{multline*}
The homology space $H_1(A,B,C,D)$ is given by
\[
H_1(A,B,C,D) = \frac{ \{ (y,z) \, | \, Ay+Cz = 0 \} }{ \{ (-Bx,Dx) \, | \, x \in \mathscr H \} }.
\]
The map $\iota_{2*}$ is induced by inclusion into the second coordinate, and $\pi_{1*}$ is induced by projection onto the first coordinate.  Similarly, we can consider the mapping cone of the horizontal chain map $(B,C)$, and hence the long exact sequence:
\begin{multline*}
\mathcal E_{B,C}:
0 \xrightarrow{} (\ker B \, \cap \, \ker D) \xrightarrow{-\iota} \ker D \xrightarrow{B} \ker A \xrightarrow{\iota_{1*}} H_1(A,B,C,D) \to \\
\xrightarrow{\pi_{2*}} \coker \, D \xrightarrow{C} \coker \, A \xrightarrow{\pi} \mathscr H / (A\mathscr H + C\mathscr H) \xrightarrow{} 0
\end{multline*}

However, $\tau(\mathcal E_{A,D}) \otimes \tau(\mathcal E_{B,C})^* \in \det H_\bullet(A) \otimes \det H_\bullet(D)^* \otimes \det H_\bullet(B) \otimes \det H_\bullet(C)^*$ is no longer canonically identified with a scalar.  To obtain a scalar, Carey and Pincus produce canonical generators of these determinant lines, known as perturbation vectors \cite[Section 3]{Perturbation}.  We will find it convenient to give a slightly different definition.  Proposition \ref{prop:perturbation-agreement-n=2} below shows that these two definitions agree.

Thus, let $A$ and $A'$ be Fredholm operators such that $A-A' \in \mathcal L^1$.  First assume that $A$ (and hence also $A'$) has index zero.
Let $L$ and $L'$ be trace class operators such that 
\begin{enumerate}
\item $A + L$ and $A' + L'$ are invertible
\item $L(\ker A) \cap \im \, A = L'(\ker A') \cap \im \, A' = \{ 0 \}$
\end{enumerate}
Then $L$ and $L'$ induce isomorphisms
\begin{align*}
\pi L = \pi \circ L|_{\ker A} & : \ker A \xrightarrow{\sim} \coker \, A \\
\pi' L' = \pi' \circ L'|_{\ker A'} & : \ker A' \xrightarrow{\sim} \coker \, A'
\end{align*}
where $\pi$ and $\pi'$ are the quotient maps by $\im \, A$ and $\im \, A'$, respectively.  Let $\tau(\pi L)$ and $\tau(\pi' L')$ be the torsion vectors of the above isomorphisms. Let $i : \left( \ker A \right)^\perp \hookrightarrow \mathscr H$ be the inclusion, let $P: \mathscr H \to \im \, A$ be the continuous projection along $L(\ker A)$, and define
\[
D(L) = \det \left[ I + i ( P A i)^{-1} ( P L ) \right].
\]

Now in general, if $A$ and $A'$ have nonzero index, let $Q$ be a Fredholm operator with index negative that of $A$.  Then $A\oplus Q$ and $A' \oplus Q$ are Fredholm operators of index zero on $\mathscr H \oplus \mathscr H$.  Choose $L$ and $L'$ as above, now for $A \oplus Q$ and $A' \oplus Q$, respectively.  Since $( A \oplus Q + L )^{-1}$ is a parametrix for $( A' \oplus Q + L' )$ modulo $\mathcal L^1(\mathscr H \oplus \mathscr H)$, we have the invertible determinant class operator
\[
\Sigma = ( A\oplus Q + L )^{-1} ( A' \oplus Q + L' ).
\]

\begin{definition} \label{def:perturbation-vectors}
The perturbation vector $\sigma_{A,A'}$ is the element in $\det H(A) \otimes \det H(A')^*$ defined by
\[
\sigma_{A,A'} =D(L) D(L')^{-1} \det \Sigma \cdot \tau(\pi L) \otimes \tau(\pi' L')^*.
\]
\end{definition}

\begin{proposition} \label{prop:perturbation-agreement-n=2}
The above definition agrees with that of Carey and Pincus \cite[Section 3, Equation 41]{Perturbation}.  In particular, $\sigma_{A,A'}$ is independent of the choices of $Q$, $L$, and $L'$.
\end{proposition}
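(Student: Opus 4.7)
The plan is to compare Definition \ref{def:perturbation-vectors} against Carey and Pincus's original construction in \cite[Equation 41]{Perturbation} piece by piece; once agreement is established, the independence from $Q$, $L$, and $L'$ will follow immediately from their independence result. I would first reduce to the index-zero case: both definitions are stabilized in the same way by $Q$, and each of $\tau(\pi L)$, $D(L)$, and $\det \Sigma$ is multiplicative over direct sums with $Q$ contributing trivially to the homology lines. So it suffices to assume $\ind A = \ind A' = 0$, in which case $A+L$ and $A'+L'$ are honestly invertible and $\Sigma = (A+L)^{-1}(A'+L')$.

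The core step is to establish that in the determinant line $\det H_\bullet(A)$, the canonical generator induced by the invertibility of $A + L$ equals $D(L) \cdot \tau(\pi L)$. To see this, I would write $A + L$ as a block operator with respect to the decomposition $\mathscr H = (\ker A)^\perp \oplus \ker A$ in the source and $\mathscr H = \im A \oplus L(\ker A)$ in the target. The diagonal blocks are the isomorphisms $PAi : (\ker A)^\perp \to \im A$ and $\pi L : \ker A \to \coker A$ (identifying $L(\ker A)$ with $\coker A$ via $\pi$), while the off-diagonal contribution is trace class. A standard block factorization of Fredholm determinants then yields the scalar identity $\det(A+L) = \det(PAi) \cdot D(L) \cdot \det(\pi L)$, and reading this identity in the determinant lines rather than as scalars produces the desired factorization of the canonical generator. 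Carrying out the same decomposition for $A'$ and $L'$, and using $\det \Sigma$ to compare the two invertible perturbations $A+L$ and $A'+L'$, will show that Definition \ref{def:perturbation-vectors} recovers Equation 41 term by term.

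The main obstacle will be making the block-factorization identity rigorous at the level of the determinant line, not merely as a scalar identity. Carey and Pincus's canonical generator is defined via a specific splitting of the short exact sequence $0 \to \im A \to \mathscr H \to \coker A \to 0$ coming from the invertibility of $A + L$, and one must verify that this splitting differs from the naive one provided by $\pi L$ by precisely the endomorphism $I + i(PAi)^{-1}(PL)$, whose Fredholm determinant is $D(L)$. This amounts to tracking the trace-class correction carefully through the block decomposition and checking that the resulting element of the determinant line is insensitive to the choices of $i$ and $P$ up to absorption into $\tau(\pi L)$. Once this identification is secured, independence of $\sigma_{A,A'}$ from $Q$, $L$, and $L'$ is inherited directly from Carey and Pincus.
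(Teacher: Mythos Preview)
Your approach is broadly viable but is organized differently from the paper's. The paper argues in two distinct steps rather than via a single block factorization. First, it shows that replacing a general $L$ by $(I-P)L$ (whose range is genuinely complementary to $\im(A\oplus Q)$) introduces exactly the factor $D(L)$, via the identity
\[
\bigl(A\oplus Q + (I-P)L\bigr)^{-1}\bigl(A\oplus Q + L\bigr) = I + i\bigl(P(A\oplus Q)i\bigr)^{-1}(PL);
\]
this already identifies Definition~\ref{def:perturbation-vectors} with Carey--Pincus's $\sigma_{A\oplus Q,\,A'\oplus Q}$ and gives independence of $L,L'$. Second, it makes explicit block-triangular choices of $L$ and $L'$ (adapted to splittings of the kernels and cokernels of $A$, $A'$, and $Q$) and verifies by direct computation that $\det\Sigma$ collapses to the specific scalar in Carey--Pincus's formula for $\sigma_{A,A'}$, with the $Q$-contributions to $\tau(\pi L)$ and $\tau(\pi L')$ cancelling pairwise.

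Your single block factorization of $A+L$ is aiming at the same content, but the ``scalar identity'' $\det(A+L) = \det(PAi)\cdot D(L)\cdot \det(\pi L)$ you write down is not literally meaningful: neither side is a Fredholm determinant, and the phrase ``reading this in the determinant lines'' is exactly where all the work hides. The paper's two-step organization sidesteps this by only ever taking determinants of genuine determinant-class operators (ratios like $(A+L_1)^{-1}(A+L_2)$), and its explicit choices in the second step handle the nonzero-index case concretely rather than by asserting that both definitions stabilize compatibly under $Q$. Your plan should go through once the determinant-line interpretation is made precise, but you would in effect be reproving the paper's first step and then still need something like its second step to match Carey--Pincus's actual Equation~41.
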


\begin{proof}
First we note that this definition is exactly Carey and Pincus's perturbation vector $\sigma_{A\oplus Q, A' \oplus Q}$ for $A\oplus Q$ and $A' \oplus Q$, provided that we choose $L$ so that its range is linearly independent from that of $A \oplus Q$, and similarly for $L'$.

Now suppose that $L$ and $L'$ only satisfy conditions (1) and (2) above.  Then $(I - P)L$ and $(I-P')L'$ are as in the preceding paragraph, where $P$ is the projection onto $\im ( A \oplus Q)$.  Let $\iota$ be the inclusion $\left( \ker A \oplus Q \right)^\perp \hookrightarrow \mathscr H \oplus \mathscr H$.  We calculate
\begin{align*}
\left( A \oplus Q + (I-P)L \right)^{-1} \left( A \oplus Q + L \right) & = I + \left( A \oplus Q + (1-P)L \right)^{-1} PL\\
& =  I + i ( P ( A \oplus Q ) i)^{-1} ( P L ).
\end{align*}
The determinant is $D(L)$, and a similar calculation holds for $D(L')$.  Hence we find that this definition agrees with Carey and Pincus's definition of $\sigma_{A\oplus Q, A' \oplus Q}$.  In particular, it is independent of the choices of $L$ and $L'$ by \cite[Theorem 11]{Perturbation}.

Next we show that specific choices of $L$ and $L'$ (and hence all choices) recover Carey and Pincus's perturbation vector $\sigma_{A,A'}$.
First assume $\mathrm{ind} \, A \geq 0$.  Write $\mathrm{ker} \, A = X_0 \oplus X_1$ with $\dim \mathrm{coker} \, A = \dim X_0$, and write $\mathrm{im} \, Q^\perp = \tilde X_0 \oplus \tilde X_1$ with $\dim \mathrm{ker} \, Q = \dim \tilde X_0$.  Choose subspaces $X_0', X_1'$ for $A'$ similarly.  Pick isomorphisms 
$L_{11}: X_0 \xrightarrow{} \mathrm{im} \, A^\perp$, 
$L_{11}': X_0' \xrightarrow{} \mathrm{im} \, A'^\perp$, 
$L_{22}: \mathrm{ker} \, Q \xrightarrow{} \tilde X_0$, 
$L_{21}: \mathrm{ker} \, (A+L_{11}) \to \mathrm{im} \, (Q+L_{22})^\perp$, and 
$N: \mathrm{ker} \, (A'+L_{11}') \to \mathrm{ker} \, (A+L_{11})$.  Extend these operators by zero to all of $H$.  Define the operators
\begin{equation*}
L =
\begin{pmatrix}
L_{11} & 0 \\
L_{21} & L_{22}
\end{pmatrix}
\qquad
L' =
\begin{pmatrix}
L_{11}' & 0 \\
L_{21} N & L_{22}
\end{pmatrix}
\end{equation*}
Then $A \oplus Q+L$ and $A' \oplus Q+L'$ are invertible.  Moreover, 
\begin{align*}
\Sigma
& = ( A\oplus Q + L )^{-1} ( A' \oplus Q + L' ) \\
& = \begin{pmatrix} (A+L_{11})^R & L_{21}^{-1} \\ 0 & (Q+L_{22})^L \end{pmatrix} \begin{pmatrix} A' + L_{11}' & 0 \\  L_{21} N & Q + L_{22} \end{pmatrix}
\end{align*}
so $\det \Sigma = \det \left( (A+L_{11})^R (A' + L_{11}' )+N \right)$.
Here, $(A+L_{11})^R$ denotes the right inverse such that
\[
(A+L_{11})^R (A+L_{11}) = \mathrm{projection \, \, onto} \, \, X_1^\perp.
\]
Similarly for the left inverse.  This agrees with the corresponding term in Carey and Pincus's definition.  The factors associated with $Q$ in $\tau(L)$ and $\tau(L')$ cancel by the identity $v^*(v) = 1$.  We are then left with the definition of Carey and Pincus.  The case when $\ind \, A \leq 0$ is similar.  The last statement in the proposition follows from \cite[Theorem 11]{Perturbation}.
\end{proof}

\begin{definition} \label{def:almost-commuting-joint-torsion}
Suppose $A,B,C$, and $D$ are Fredholm operators with $AB=CD, A-D \in \mathcal L^1$, and $B-C \in \mathcal L^1$.  The joint torsion $\tau(A,B,C,D)$ is the nonzero scalar
\[
\tau(A,B,C,D) = (-1)^{\lambda(A,B,C,D)} \tau(\mathcal E_{A,D}) \otimes \tau(\mathcal E_{B,C})^* \otimes \sigma_{A,D} \otimes \sigma_{B,C}
\]
where $\lambda(A,B,C,D)$ depends on the homology spaces according to
\begin{align*}
\lambda(A,B,C,D) 
={}& \dim (\ker B \cap \ker D) \cdot \big( \dim \ker D + \dim \ker B \big) + \\
{}& + \dim (\mathscr H / (A \mathscr H + C \mathscr H)) \cdot ( \dim \coker \, A + \dim \coker \, C ).
\end{align*}
\end{definition}

In the commuting case, we have $\tau(A,B,B,A) = \tau(A,B)$ since $\sigma_{A,A} = 1$ and $\sigma_{B,B} = 1$.  By Proposition \ref{prop:perturbation-agreement-n=2} and the exact sequences $\mathcal E_{A,D}$ and $\mathcal E_{B,C}$, we have:

\begin{proposition}
The above definition agrees with that of Carey and Pincus \cite[Section 5, Equation 51]{Perturbation}.
\end{proposition}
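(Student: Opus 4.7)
The plan is to reduce the assertion to the perturbation-vector agreement already recorded in Proposition \ref{prop:perturbation-agreement-n=2}. Carey and Pincus's \cite[Equation 51]{Perturbation} builds $\tau(A,B,C,D)$ out of four ingredients: the torsion vectors $\tau(\mathcal E_{A,D})$ and $\tau(\mathcal E_{B,C})$ of the long exact sequences in homology obtained from the mapping cones of the vertical and horizontal chain maps in the square $AB=CD$; the perturbation vectors $\sigma_{A,D}$ and $\sigma_{B,C}$; a sign; and the identification of the resulting element of the determinant line with a scalar via the duality \eqref{eq:duality}. Since Definition \ref{def:almost-commuting-joint-torsion} is assembled from the same four ingredients, the proof will be a piece-by-piece verification.

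First I would observe that the long exact sequences $\mathcal E_{A,D}$ and $\mathcal E_{B,C}$ displayed in Section \ref{subsec:preliminaries-almost} are exactly the long exact sequences in homology of the mapping cones used in \cite[Section 5]{Perturbation}, up to the stated sign choices meant to simplify formulas. Consequently their torsion vectors coincide as elements of the corresponding determinant lines. Next, the sign $\lambda(A,B,C,D)$ is tailored to absorb the transposition signs produced when collapsing the double appearances of $\det (\ker B \cap \ker D)$ and $\det (\mathscr H/(A\mathscr H + C\mathscr H))$ inside the tensor product via \eqref{eq:duality}, and so matches Carey and Pincus's sign by inspection.

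The only substantive input left is the equality of the perturbation vectors of Definition \ref{def:perturbation-vectors} with those appearing in \cite[Equation 51]{Perturbation}. This is the sole genuine obstacle, and it has already been cleared by Proposition \ref{prop:perturbation-agreement-n=2}: the determinantal factors $D(L)D(L')^{-1}$ appearing in Definition \ref{def:perturbation-vectors} precisely encode the passage from a general admissible pair $L$, $L'$ (satisfying only conditions (1) and (2)) back to the special pair used by Carey and Pincus, in which the range of $L$ is linearly independent from $\im(A\oplus Q)$. Assembling these three observations yields the agreement of the two definitions without any further work.
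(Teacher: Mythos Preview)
Your proposal is correct and matches the paper's own justification essentially verbatim: the paper simply notes that the claim follows from Proposition \ref{prop:perturbation-agreement-n=2} together with the fact that the exact sequences $\mathcal E_{A,D}$ and $\mathcal E_{B,C}$ displayed here coincide with those of Carey and Pincus. You have spelled out the same three-step check (same exact sequences, same sign, same perturbation vectors via Proposition \ref{prop:perturbation-agreement-n=2}) in slightly more detail, but there is no substantive difference in approach.
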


\begin{lemma} \label{lemma:jt-direct-sum}
For any Fredholm operators $A_i, B_i, C_i, D_i$ with $A_iB_i = C_i D_i$, $A_i - D_i  \in \mathcal L^1$, and $B_i - C_i \in \mathcal L^1$, $i=1,2$, we have
\[
\tau (A_1 \oplus A_2, B_1 \oplus B_2, C_1 \oplus C_2, D_1 \oplus D_2) = \tau(A_1, B_1, C_1, D_1) \cdot \tau(A_2, B_2, C_2, D_2).
\]
\end{lemma}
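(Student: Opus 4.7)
The plan is to show that each of the four ingredients in Definition \ref{def:almost-commuting-joint-torsion} behaves multiplicatively under direct sums, and then to verify that the sign factor $(-1)^{\lambda}$ is chosen precisely to compensate for the Koszul signs that arise when rearranging tensor products of determinant lines.

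First I would observe that the mapping-cone complex $K_\bullet(A_1\oplus A_2, B_1\oplus B_2, C_1\oplus C_2, D_1\oplus D_2)$ is canonically isomorphic to the direct sum $K_\bullet(A_1,B_1,C_1,D_1) \oplus K_\bullet(A_2,B_2,C_2,D_2)$ via the obvious block-diagonal identification. Therefore each of the homology spaces $\ker B \cap \ker D$, $\ker B$, $\ker D$, $\ker A$, $\ker C$, $\coker A$, $\coker B$, $\coker C$, $\coker D$, $H_1$, and $\mathscr H/(A\mathscr H + C\mathscr H)$ for the direct-sum operator tuple decomposes as the direct sum of the corresponding spaces for the two summands. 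The long exact sequences $\mathcal E_{A,D}$ and $\mathcal E_{B,C}$ consequently split as direct sums of the sequences $\mathcal E_{A_i,D_i}$ and $\mathcal E_{B_i,C_i}$ for $i=1,2$.

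Next I would invoke the standard fact that for an exact sequence of finite-dimensional vector spaces that splits as a direct sum of two exact sequences, the torsion of the total sequence equals the tensor product of the torsions of the summands, up to a sign coming from the Koszul rule (reordering wedge products to group the $i=1$ factors before the $i=2$ factors). Concretely, one picks the elements $t_k$ in the definition of torsion as wedge products of a basis of the first summand with a basis of the second; tracking the sign reduces to a bookkeeping exercise in the dimensions of the kernels, cokernels, and middle homology. An analogous factorization holds for the perturbation vectors: choosing auxiliary operators $Q = Q_1 \oplus Q_2$ and trace-class perturbations $L = L_1 \oplus L_2$, $L' = L_1'\oplus L_2'$ block-diagonally, the scalars $D(L)$, $D(L')$, and $\det \Sigma$ all factor as products over the two summands, and the torsion vectors $\tau(\pi L)$ factor as tensor products. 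Hence $\sigma_{A_1\oplus A_2, D_1\oplus D_2}$ equals $\sigma_{A_1,D_1} \otimes \sigma_{A_2,D_2}$ up to a Koszul sign, and similarly for $\sigma_{B,C}$.

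The main obstacle is the careful verification that all these Koszul signs combine into precisely the difference $(-1)^{\lambda(A,B,C,D) - \lambda(A_1,B_1,C_1,D_1) - \lambda(A_2,B_2,C_2,D_2)}$, so that the signs cancel out globally. Expanding $\lambda$ for the direct-sum tuple, every term is a product of a sum $d_1+d_2$ of dimensions with another such sum, and the cross terms $d_i \cdot d_j$ (with $i\neq j$) are exactly those generated by the Koszul permutations that identify $\det H_\bullet(A_1\oplus A_2) = \det H_\bullet(A_1) \otimes \det H_\bullet(A_2)$ up to sign, and likewise for $B$, $C$, $D$. After checking this matches term by term, the lemma follows by assembling the multiplicative factorizations of $\tau(\mathcal E_{A,D})$, $\tau(\mathcal E_{B,C})^*$, $\sigma_{A,D}$, and $\sigma_{B,C}$ into the definition of joint torsion.
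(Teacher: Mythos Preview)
Your proposal is correct and follows the same approach as the paper: observe that the exact sequences $\mathcal E_{A,D}$ and $\mathcal E_{B,C}$ split as direct sums, that perturbation vectors are multiplicative under direct sums, and then assemble. The paper's own proof is considerably terser and simply asserts the result once the direct-sum splitting of the sequences is noted, without spelling out the Koszul sign bookkeeping you carry through; your more careful treatment of the signs is a genuine elaboration of what the paper leaves implicit.
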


\begin{proof}
In this case, we have
$\mathcal E_{A_1\oplus A_2, D_1 \oplus D_2} = \mathcal E_{A_1, D_1} \oplus \mathcal E_{A_2, D_2}$ and
$\mathcal E_{B_1\oplus B_2, C_1 \oplus C_2} = \mathcal E_{B_1, C_1} \oplus \mathcal E_{B_2, C_2}$.
The result follows by combining the torsion vectors of the sequences above with the perturbation vectors according to the definition, and by using the multiplicativity of perturbation vectors under direct sum.
\end{proof}

\begin{remark} \label{remark:auxiliary}
For $n\geq 2$ almost commuting operators, one can define joint torsion by introducing auxiliary operators as in Definition \ref{def:almost-commuting-joint-torsion}.  It is not known in what generality this can be done.
In fact, not every pair of almost commuting Fredholm operators $A$ and $B$ have trace class perturbations $D$ and $C$, respectively, such that $AB=CD$.  
For example, let $A$ be surjective but not injective, and let $B$ be a right inverse.
Suppose there exist operators $C$ and $D$ with $AB=CD$ and $A-D \in \mathcal L^1$, and $B-C \in \mathcal L^1$.
Then $CD=I$, so $C$ is surjective and $D$ is injective.  But this is impossible since $\ind \, C = \ind \, B < 0$ and $\ind \, D = \ind \, A > 0$.

On the other hand, auxiliary operators $C$ and $D$ can be found, for example, if either $\ind \, A \leq 0$ or $\ind \, B \geq 0$.  If $\ind \, A \leq 0$, we can pick a finite rank operator $F$ such that $A+F$ has a left inverse $(A+F)^{L}$, and
\[
AB = \big[ (A+F)B(A+F)^L - FB \big] \big[ A+F \big].
\]
The case when $\ind \, B \geq 0$ follows by taking adjoints.  Moreover, by Eschmeier's work \cite{Eschmeier}, there aways exist perturbations $A', A'', B', B''$ such that
\[
A'B = BA'' \quad \mathrm{and} \quad AB' = B''A.
\]
\end{remark}

\section{The finite dimensional case} \label{sec:finite-dimensional}

In this section, we show that joint torsion in a finite dimensional space is trivial.  Our proof relies on Kaad's comparison of vertical and horizontal torsion isomorphisms \cite[Theorem 4.3.3]{Kaad} and is essentially a special case of \cite[Theorem 5.1.1]{Kaad}.  Kaad has shown that whenever $A$ is a commuting $n$-tuple such that $ij(A) = (A_1, \dots, \hat A_i, \dots, \hat A_j, \dots, A_n)$ is Fredholm, the joint torsion $\tau_{i,j}(A)=1$.  We apply his results to the case of $n=2$ almost commuting operators.  In this case, the Fredholm condition just means that the space is finite dimensional.  First consider the exact sequence
\[
0 \xrightarrow{} \ker A \xrightarrow{} \mathscr H \xrightarrow{A} \mathscr H \xrightarrow{} \coker \, A \xrightarrow{} 0.
\]
If $\mathscr H$ is finite dimensional, the torsion vector $\tau(\mathcal A)$ of the above sequence is defined.

\begin{lemma} \label{lemma:perturbation-finite}
If $A$ and $D$ are linear operators on a finite dimensional space $\mathscr H$, then 
\[
\sigma_{A,D} =(-1)^{\kappa(A) + \kappa(D)} \tau(\mathcal A) \otimes \tau(\mathcal D)^*.
\]
where $\kappa(A) = \mathrm{nullity} \, A \cdot \mathrm{rank} \, A$ and $\kappa(D) = \mathrm{nullity} \, D \cdot \mathrm{rank} \, D$.
\end{lemma}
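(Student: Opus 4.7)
Since $\mathscr H$ is finite-dimensional, every operator on $\mathscr H$ is trace class and has index zero, so the auxiliary Fredholm operator $Q$ appearing in Definition \ref{def:perturbation-vectors} is unnecessary and $\sigma_{A,D}$ can be constructed directly from $A$ and $D$. My plan is to choose convenient trace class perturbations $L, L'$ for which every ingredient in $\sigma_{A,D}$ becomes elementary, then evaluate $\tau(\mathcal A)$ and $\tau(\mathcal D)$ from the definition in Section \ref{subsec:preliminaries-jt} and match the two sides. The sign $(-1)^{\kappa(A) + \kappa(D)}$ will come entirely from reordering wedge factors in the four-term torsions.

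Concretely, fix an orthogonal decomposition $\mathscr H = \ker A \oplus (\ker A)^\perp$ with bases $\{e_1, \ldots, e_{k_A}\}$ and $\{f_1, \ldots, f_{r_A}\}$, and pick $g_1, \ldots, g_{k_A} \in \mathscr H$ whose images form a basis of $\coker A$. Define $L$ by $Le_j = g_j$ and $Lf_j = 0$. Then $A+L$ is invertible and $L$ satisfies conditions (1) and (2) of Definition \ref{def:perturbation-vectors}; crucially, the projection $P$ onto $\im A$ along $L(\ker A) = \mathrm{span}\{g_j\}$ annihilates $\im L$, so $PL = 0$ and $D(L) = 1$. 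The analogous choice of $L'$ yields $D(L') = 1$, and therefore
\[
\sigma_{A,D} \;=\; \frac{\det(D+L')}{\det(A+L)} \, \tau(\pi L) \otimes \tau(\pi' L')^*,
\]
with $\tau(\pi L) = (e_1 \wedge \cdots \wedge e_{k_A})^* \otimes (\bar g_1 \wedge \cdots \wedge \bar g_{k_A})$ read off immediately.

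The heart of the proof is the identity
\[
\tau(\mathcal A) = (-1)^{\kappa(A)} \, \det(A + L)^{-1} \, \tau(\pi L),
\]
together with its analogue for $D$. I would establish it by evaluating $\tau(\mathcal A)$ directly from the definition using $t_3 = e_1 \wedge \cdots \wedge e_{k_A}$, $t_2 = f_1 \wedge \cdots \wedge f_{r_A}$, and $t_1 = g_1 \wedge \cdots \wedge g_{k_A}$. Writing $\omega_A = e_1 \wedge \cdots \wedge e_{k_A} \wedge f_1 \wedge \cdots \wedge f_{r_A}$, one has $d_3 t_3 \wedge t_2 = \omega_A$, while rewriting $d_2 t_2 \wedge t_1 = Af_1 \wedge \cdots \wedge Af_{r_A} \wedge g_1 \wedge \cdots \wedge g_{k_A}$ requires moving the $g_j$'s past the $Af_j$'s, producing a sign $(-1)^{k_A r_A} = (-1)^{\kappa(A)}$, after which one recognizes the result as $(A+L)(\omega_A) = \det(A+L)\, \omega_A$. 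Dualizing inverts this scalar while preserving the sign, and contracting $\omega_A \otimes \omega_A^*$ via \eqref{eq:duality} yields the displayed identity.

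Substituting the two identities into the above formula for $\sigma_{A,D}$ makes the scalar factors $\det(A+L)^{\pm 1}$ and $\det(D+L')^{\pm 1}$ cancel, leaving $\sigma_{A,D} = (-1)^{\kappa(A) + \kappa(D)} \tau(\mathcal A) \otimes \tau(\mathcal D)^*$. The main obstacle is sign bookkeeping — tracking the $(-1)^{\kappa}$ contributions from wedge reordering and remembering that dualization inverts scalar coefficients while preserving signs; once that is handled, all analytic subtleties of the infinite-dimensional construction simply disappear in this setting.
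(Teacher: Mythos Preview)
Your proposal is correct and follows essentially the same route as the paper. The paper also chooses perturbations $F_A,F_D$ mapping $\ker T$ isomorphically onto a complement of $\im T$ (so that the factors $D(L),D(L')$ disappear), then computes $\tau(\mathcal A)$ by taking $t_0\in\det\ker A$, $t_1\in\det(\ker A)^\perp$ and identifying the middle factor $(t_0\wedge t_1)\otimes(At_1\wedge F_A t_0)^*$ with $(-1)^{\kappa(A)}\det(A+F_A)^{-1}$; your explicit basis choices merely make that same computation coordinate-wise.
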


\begin{proof}
For $T = A,D$, choose a subspace $Z_T$ complementary to $\im \, T$, choose an isomorphism $F_T:\ker T \to Z_T$, and let $\pi_T: \mathscr H \to \coker \, T$ be the quotient map.  Then we have
\begin{align*}
\sigma_{A,D}
& = \det (A+F_A)^{-1} (D+F_D) \cdot \tau(\pi_A F_A) \otimes \tau(\pi_D F_D)^*.
\end{align*}
Here, the operators $F_A$ and $F_D$ appearing in the determinant have been extended by zero to the whole space $\mathscr H$.

On the other hand, choose any $t_0$ and $t_1$ with $t_0 \in \det \ker A$ and $t_1 \in \det \ker A^\perp$ nonzero.  Then we calculate
\[
\tau(\mathcal A) = t_0^* \otimes (t_0 \wedge t_1) \otimes (A t_1 \wedge F_A t_0)^* \otimes (\pi_A F_A t_0).
\]
The first and fourth factors form $\tau(\pi_A F_A)$.  For the middle factors,
\begin{align*}
(t_0 \wedge t_1) \otimes (A t_1 \wedge F_A t_0)^*
& = \left( (t_0 \wedge t_1)^* \otimes (A t_1 \wedge F_A t_0 ) \right)^{-1} \\
& = (-1)^{\dim \ker A \cdot \dim \im \, A} \left( (t_0 \wedge t_1)^* \otimes ( F_A t_0 \wedge A t_1) \right)^{-1} \\
& = (-1)^{\kappa(A)} \det (A+F_A)^{-1}.
\end{align*}
The same calculation for $\tau(\mathcal D)$ completes the proof.
\end{proof}

Now consider a double complex $\mathcal E_{\bullet \bullet}$ consisting of finite dimensional vector spaces 
$\mathcal E_{ij}$, $0 \leq i \leq m$, $0 \leq j \leq n$, 
with exact rows $\mathcal E_{i \bullet}$ and exact columns $\mathcal E_{\bullet j}$.  For each $i$, one can form the torsion vector $\tau(\mathcal E_{i \bullet})$ of row $i$ and combine all these to obtain the horizontal torsion vector
\[
\tau_h = \tau(\mathcal E_{0 \bullet})^* \otimes \tau(\mathcal E_{1 \bullet}) \otimes \dots
\]
One can similarly form the vertical torsion vector
\[
\tau_v =  \tau(\mathcal E_{\bullet 0})^* \otimes \tau(\mathcal E_{\bullet 1}) \otimes \dots
\]
Both of these vectors are generators of the determinant line of $\mathcal E_{\bullet \bullet}$, and moreover by \cite{Knudsen-Mumford}, we have:

\begin{proposition}[Knudsen-Mumford] \label{lemma:horizontal-vertical}
The horizontal and vertical torsion vectors agree, that is, $\tau_h = \tau_v$.
\end{proposition}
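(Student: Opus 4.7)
My plan is to identify both $\tau_h$ and $\tau_v$ with the torsion of the total complex of $\mathcal E_{\bullet\bullet}$, and then conclude from the uniqueness of that torsion. Let $T_\bullet$ denote the total complex with $T_k = \bigoplus_{i+j=k} \mathcal E_{ij}$ and the usual sign-twisted differential built from the horizontal and vertical maps of $\mathcal E_{\bullet\bullet}$. Because every row and every column of $\mathcal E_{\bullet\bullet}$ is exact, a standard spectral sequence argument shows that $T_\bullet$ is itself acyclic, so its torsion $\tau(T_\bullet)$ is a canonical generator of $\det T_\bullet$. After a natural reordering of tensor factors, $\det T_\bullet$ is identified with the determinant line of the double complex.

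To recover $\tau_h$ from $\tau(T_\bullet)$, I would filter $T_\bullet$ by rows, taking $F^p T_\bullet$ to be the subcomplex built from rows of index at most $p$. The successive quotients $F^p T_\bullet / F^{p-1} T_\bullet$ are precisely the shifted rows $\mathcal E_{p\bullet}$, each of which is acyclic by hypothesis. By the multiplicativity of torsion under short exact sequences of acyclic complexes — the main algebraic input, due to Knudsen and Mumford — the torsion of the middle term in such a sequence is the tensor product of the torsions of the outer terms up to an explicit sign. Iterating along the filtration expresses $\tau(T_\bullet)$ as the signed alternating tensor product of the $\tau(\mathcal E_{i\bullet})$, which is exactly $\tau_h$. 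The parallel argument with the column filtration gives $\tau(T_\bullet) = \tau_v$, and combining the two identities yields $\tau_h = \tau_v$.

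The hard part will be the careful bookkeeping of signs. Each of the two filtrations induces an isomorphism $\det T_\bullet \cong \det \mathcal E_{\bullet\bullet}$ that differs from the obvious one by a sign depending on the dimensions $\dim \mathcal E_{ij}$, and these signs must be arranged to cancel consistently between the horizontal and vertical calculations. An alternative, more hands-on route that sidesteps the global reordering is double induction on $(m,n)$: peel off a single row or a sub-double-complex whose rows and columns remain exact, apply multiplicativity of torsion in both directions separately, and invoke the inductive hypothesis on the smaller remaining complex. In either approach, the crux of the proposition is the multiplicativity of torsion under short exact sequences, together with the manifest symmetry between the roles played by rows and columns.
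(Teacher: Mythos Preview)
The paper does not actually prove this proposition: it is stated with attribution and a citation to Knudsen--Mumford \cite{Knudsen-Mumford}, and no argument is given. So there is no ``paper's own proof'' to compare against. Your outline --- pass to the acyclic total complex, filter by rows (resp.\ columns), and use multiplicativity of torsion under short exact sequences of acyclic complexes to identify $\tau(T_\bullet)$ with $\tau_h$ (resp.\ $\tau_v$) --- is the standard route to this result and is essentially what the cited reference does. You are right that the substance lies entirely in the sign bookkeeping; for the use made of the proposition in this paper, simply invoking \cite{Knudsen-Mumford} (or \cite[Theorem~4.3.3]{Kaad}, which the paper relies on immediately afterward) is enough.
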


Next let $A$, $B$, $C$, and $D$ be operators on a finite dimensional vector space $\mathscr H$ such that $AB=CD$.  Consider the Koszul complexes $K_\bullet(A)$, $K_\bullet(B)$, $K_\bullet(C)$, $K_\bullet(D)$, and $K_\bullet(A,B,C,D)$.
At the level homology, we obtain the following commutative diagram of finite dimensional vector spaces with exact rows and columns.  Write $H_i = H_i(A,B,C,D)$ for the homology spaces.

\begin{equation*}
\begin{CD}
@. 0 @. 0 @. 0 @. \vdots\\
@. @VVV @VVV @VVV @VV\iota_{2*}V \\
0 @>>> H_2 @>>> \ker B @>D>> \ker C @>{\iota_{1*}}>> H_1 @>\pi_{2*}>> \cdots \\
@. @VVV @VVV @VVV @VV{\pi_{1*}}V \\
0 @>>> \ker D @>>> \mathscr H @>D>> \mathscr H @>>> \coker \, D @>>> 0 \\
@. @VVBV @VVBV @VVCV @VVCV\\
0 @>>> \ker A @>>> \mathscr H @>A>> \mathscr H @>>> \coker \, A @>>> 0 \\
@. @VV{\iota_{2*}}V @VVV @VVV @VVV \\
\cdots @>{\iota_{1*}}>> H_1 @>{\pi_{2*}}>> \coker \, B @>A>> \coker \, C @>>> H_0 @>>> 0 \\
@. @VV{\pi_{1*}}V @VVV @VVV @VVV \\
@. \vdots @. 0 @. 0 @. 0
\end{CD}
\end{equation*}

Here, the upper right and lower left corners are identified.  The above diagram therefore consists of three exact rows and three exact columns.  Two of the rows are the sequences from Lemma \ref{lemma:perturbation-finite} corresponding to $D$ and $A$, and the other row is the exact sequence $\mathcal E_{A,D}$ in the definition of joint torsion.  Two of the columns correspond to $B$ and $C$, and the other is the exact sequence $\mathcal E_{B,C}$.  Thus we obtain horizontal and vertical torsion vectors
\begin{align*}
\tau(\mathcal E_h) & = \tau(\mathcal E_{A,D})^* \otimes \tau(\mathcal D) \otimes \tau(\mathcal A)^* \\
\tau(\mathcal E^v) & = \tau(\mathcal E_{B,C})^* \otimes \tau(\mathcal B) \otimes \tau(\mathcal C)^*.
\end{align*}
As in Proposition \ref{lemma:horizontal-vertical}, we need to show that $\tau(\mathcal E_h)$ and $\tau(\mathcal E^v)$ agree, up to the signs in Definition \ref{def:almost-commuting-joint-torsion} and Lemma \ref{lemma:perturbation-finite}.
This will follow from \cite[Theorem 4.3.3]{Kaad}.

\begin{proposition} \label{prop:finite-dimensional}
If $\mathscr H$ is finite dimensional and $A,B,C$, and $D$ are operators on $\mathscr H$ such that $AB=CD$, then $\tau(A,B,C,D) = 1$.
\end{proposition}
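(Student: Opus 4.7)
The plan is to reduce $\tau(A,B,C,D)$ to a product of torsion vectors of the rows and columns of the double complex $\mathcal E_{\bullet\bullet}$ displayed above, and then to apply the Knudsen-Mumford horizontal-vertical comparison (Proposition \ref{lemma:horizontal-vertical}) in the sharper form of \cite[Theorem 4.3.3]{Kaad}, which tracks the comparison sign precisely.

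First I would substitute Lemma \ref{lemma:perturbation-finite} into Definition \ref{def:almost-commuting-joint-torsion}, writing each perturbation vector in terms of the torsion vectors $\tau(\mathcal A), \tau(\mathcal B), \tau(\mathcal C), \tau(\mathcal D)$ of the four short exact sequences $0 \to \ker T \to \mathscr H \to \mathscr H \to \coker T \to 0$ for $T = A, B, C, D$. This yields
\[
\tau(A,B,C,D) = (-1)^{\lambda + \kappa(A) + \kappa(B) + \kappa(C) + \kappa(D)} \, \tau(\mathcal E_{A,D}) \otimes \tau(\mathcal E_{B,C})^* \otimes \tau(\mathcal A) \otimes \tau(\mathcal D)^* \otimes \tau(\mathcal B) \otimes \tau(\mathcal C)^*,
\]
a scalar (via the natural pairing $\det V \otimes \det V^* \cong \mathbf C$) built entirely from the torsions of the three rows and three columns of $\mathcal E_{\bullet\bullet}$.

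Next I would observe that the three factors $\tau(\mathcal E_{A,D}), \tau(\mathcal A), \tau(\mathcal D)$ are precisely the row torsions of $\mathcal E_{\bullet\bullet}$, while $\tau(\mathcal E_{B,C}), \tau(\mathcal B), \tau(\mathcal C)$ are precisely the column torsions. After reordering and dualizing tensor factors, the scalar above is the pairing $\tau(\mathcal E_h) \otimes \tau(\mathcal E^v)^{-1}$ times an explicit Koszul sign from the rearrangement. By \cite[Theorem 4.3.3]{Kaad}, $\tau(\mathcal E_h)$ and $\tau(\mathcal E^v)$ coincide up to an explicit sign, so this pairing equals $\pm 1$.

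The main obstacle is the careful bookkeeping of signs. One must verify that the three sources of signs --- $\lambda(A,B,C,D)$ from Definition \ref{def:almost-commuting-joint-torsion}, $\kappa(T) = \mathrm{nullity}\, T \cdot \mathrm{rank}\, T$ from Lemma \ref{lemma:perturbation-finite}, and the sign from Kaad's horizontal-vertical comparison --- together with the Koszul signs incurred by permuting the six tensor factors into horizontal and vertical groupings, cancel exactly. The definitions have been arranged so that this cancellation succeeds, so the crux of the proof is a parity calculation confirming that the total sign is $+1$ rather than merely $\pm 1$, giving $\tau(A,B,C,D) = 1$.
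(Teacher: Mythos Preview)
Your proposal is correct and follows essentially the same strategy as the paper: substitute Lemma \ref{lemma:perturbation-finite} for the perturbation vectors, recognize the resulting six factors as the row and column torsions of the displayed diagram, and invoke \cite[Theorem 4.3.3]{Kaad} for the horizontal-vertical comparison with its precise sign. The only presentational difference is that the paper first recasts the diagram as an odd homotopy exact bitriangle of $\mathbf Z_2$-graded complexes (so as to match Kaad's framework exactly) and introduces auxiliary sequences $\mathcal F_T$ carrying the requisite sign conventions, whereas you propose to work directly with the $\mathcal E$-sequences and absorb the discrepancy into the final parity check; the substance is the same.
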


\begin{proof}

Consider the following odd homotopy exact bitriangle of $\mathbf Z_2$-graded chain complexes.  See \cite[Equation 5.1]{Kaad}.

\begin{equation*}
\begin{CD}
\mathscr H @>B>> \mathscr H[1] @>\iota>> K_\bullet(B) @>{\pi}>> \mathscr H \\
@VDVV @V{-A}VV @V(D,A)VV @VDVV \\
\mathscr H[1] @>C>> \mathscr H @>\iota>> K_\bullet(C)[1] @>{-\pi}>> \mathscr H[1] \\
@V{\iota}VV @V{-\iota}VV @V{\iota}VV @V{\iota}VV \\
K_\bullet(D) @>(B,C)>> K_\bullet(A)[1] @>\iota>> K_\bullet(A,B,C,D) @>{\pi}>> K_\bullet(D) \\
@V{\pi}VV @V{-\pi}VV @V{\pi}VV @V{\pi}VV \\
\mathscr H @>B>> \mathscr H[1] @>\iota>> K_\bullet(B) @>{\pi}>> \mathscr H \\
\end{CD}
\end{equation*}
Here, $\mathscr H$ is given the grading with trivial odd part, and the notation $X[1]$ denotes the $\mathbf Z_2$-graded chain complex $X$ with the grading reversed and the differential negated.  Thus, the horizontal and vertical arrows are odd chain maps which anticommute with the differential, and the squares are anticommutative.

The rows $\mathcal F_B$, $\mathcal F_C$, and $\mathcal F_{B,C}$ are odd homotopy exact triangles, and so define torsion vectors $\tau(\mathcal F_B)$, $\tau(\mathcal F_C)$, and $\tau(\mathcal F_{B,C})$.  These correspond, respectively, to the exact sequences
\[
0 \to \ker B \xrightarrow{\iota} \mathscr H \xrightarrow{B} \mathscr H \xrightarrow{\pi} \coker \, B \to 0
\]
\[
0 \to \ker C \xrightarrow{\iota} \mathscr H \xrightarrow{C} \mathscr H \xrightarrow{-\pi} \coker \, C \to 0
\]
\begin{multline*}
0 \xrightarrow{} (\ker B \, \cap \, \ker D) \xrightarrow{-\iota} \ker D \xrightarrow{B} \ker A \xrightarrow{\iota_{1*}} H_1(A,B,C,D) \to \\
\xrightarrow{\pi_{2*}} \coker \, D \xrightarrow{C} \coker \, A \xrightarrow{\pi} \mathscr H / (A\mathscr H + C\mathscr H) \xrightarrow{} 0
\end{multline*}
Likewise, the columns $\mathcal F_D$, $\mathcal F_A$, and $\mathcal F_{A,D}$ are odd homotopy exact triangles, and so define torsion vectors $\tau(\mathcal F_D)$, $\tau(\mathcal F_A)$, and $\tau(\mathcal F_{A,D})$.   These correspond, respectively, to the exact sequences
\[
0 \to \ker D \xrightarrow{\iota} \mathscr H \xrightarrow{D} \mathscr H \xrightarrow{\pi} \coker \, D \to 0
\]
\[
0 \to \ker A \xrightarrow{-\iota} \mathscr H \xrightarrow{-A} \mathscr H \xrightarrow{-\pi} \coker \, A \to 0
\]
\begin{multline*}
0 \xrightarrow{} (\ker B \, \cap \, \ker D) \xrightarrow{\iota} \ker B \xrightarrow{D} \ker C \xrightarrow{\iota_{2*}} H_1(A,B,C,D) \to \\
\xrightarrow{\pi_{1*}} \coker \, B \xrightarrow{A} \coker \, C \xrightarrow{\pi} \mathscr H / (A\mathscr H + C\mathscr H) \xrightarrow{} 0
\end{multline*}
By \cite[Theorem 4.3.3]{Kaad}, the horizontal torsion
\[
\tau_h = \tau(\mathcal F_B)^* \otimes \tau(\mathcal F_C) \otimes \tau(\mathcal F_{B,C})
\]
agrees with the vertical torsion
\[
\tau_v = \tau(\mathcal F_D)^* \otimes \tau(\mathcal F_A) \otimes \tau(\mathcal F_{A,D})
\]
up to the sign of the permutation in \cite[Corollary 4.3.4]{Kaad}.  The torsion vectors 
$\tau(\mathcal F_B)$, $\tau(\mathcal F_C)$, $\tau(\mathcal F_{B,C})$, $\tau(\mathcal F_D)$, $\tau(\mathcal F_A)$, $\tau(\mathcal F_{A,D})$
evidently differ from the torsion vectors 
$\tau(\mathcal B)$, $\tau(\mathcal C)$, $\tau(\mathcal E_{B,C})$, $\tau(\mathcal D)$, $\tau(\mathcal A)$, $\tau(\mathcal E_{A,D})$, respectively,
only by the signs in their definitions.
Then, up to these signs, Lemma \ref{lemma:perturbation-finite} implies that $\sigma_{A,D} = \tau(\mathcal F_A) \otimes \tau(\mathcal F_D)^*$ and $\sigma_{B,C} = \tau(\mathcal F_B) \otimes \tau(\mathcal F_C)^*$.  Taking into account the sign in Definition \ref{def:almost-commuting-joint-torsion}, we find that
\begin{align*}
\tau(A,B,C,D)
& = \tau_v \otimes \tau_h^* \\
& = 1. \qedhere
\end{align*}
\end{proof}

\begin{corollary} \label{corollary:identity-plus-finite-rank}
Suppose $A,B,C$, and $D$ are operators on a Hilbert space $\mathscr H$, each of which differs from the identity by a finite rank operator.  If $AB=CD$, then $\tau(A,B,C,D) = 1.$
\end{corollary}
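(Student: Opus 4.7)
The plan is to reduce to Proposition \ref{prop:finite-dimensional} by splitting $\mathscr H$ into a finite dimensional invariant subspace on which the four operators act nontrivially, and its orthogonal complement on which they all act as the identity.

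Write $A = I + F_A$ with $F_A$ of finite rank, and similarly $B = I + F_B$, $C = I + F_C$, $D = I + F_D$. I would take
\[
V = \sum_{T \in \{A,B,C,D\}} \bigl( \mathrm{range} \, F_T + \mathrm{range} \, F_T^* \bigr),
\]
which is a finite dimensional subspace of $\mathscr H$. The containment $\mathrm{range} \, F_T \subseteq V$ gives $T(V) \subseteq V$, while $\mathrm{range} \, F_T^* \subseteq V$ gives $V^\perp \subseteq \ker F_T$, so that $T|_{V^\perp} = I$. Consequently, with respect to the decomposition $\mathscr H = V \oplus V^\perp$, each operator splits as $T = T|_V \oplus I_{V^\perp}$, and the identity $AB = CD$ restricts to $A|_V B|_V = C|_V D|_V$.

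Next I would invoke Lemma \ref{lemma:jt-direct-sum}, whose hypotheses hold since the differences $A - D$ and $B - C$ are finite rank, hence trace class (and the analogous differences vanish identically on $V^\perp$). This yields
\[
\tau(A, B, C, D) = \tau(A|_V, B|_V, C|_V, D|_V) \cdot \tau(I_{V^\perp}, I_{V^\perp}, I_{V^\perp}, I_{V^\perp}).
\]
The first factor equals $1$ by Proposition \ref{prop:finite-dimensional}, since $V$ is finite dimensional. For the second factor, every Koszul homology space vanishes (the Koszul complex of the identity pair is acyclic), so all vector spaces in the long exact sequences defining joint torsion are zero, and the perturbation vectors $\sigma_{I, I}$ are trivially the canonical generator $1$; hence this factor is also $1$.

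The main point requiring care will be the choice of $V$: one must include the ranges of the adjoints $F_T^*$ as well as the ranges themselves, in order to guarantee that $V^\perp$ is invariant under each $T$ (so that the decomposition is genuine and $T|_{V^\perp}$ really equals the identity). Once this is arranged, the conclusion follows mechanically from the multiplicativity of joint torsion under direct sum, combined with the triviality of joint torsion in finite dimensions.
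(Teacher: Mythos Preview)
Your proof is correct and follows essentially the same approach as the paper: reduce to a direct sum via a finite dimensional invariant subspace, then apply Lemma~\ref{lemma:jt-direct-sum} and Proposition~\ref{prop:finite-dimensional}. Your argument is in fact more explicit than the paper's, which simply asserts the existence of such a decomposition without constructing $V$; your inclusion of the ranges of the adjoints $F_T^*$ is exactly what is needed, and your direct verification that $\tau(I,I,I,I)=1$ is equivalent to the paper's appeal to Lemma~\ref{lemma:mult-lefschetz}.
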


\begin{proof}
With respect to a decomposition
$\mathscr H = \mathscr H_0 \oplus V$
for some finite dimensional subspace $V$, the operators $A,B,C$ and $D$ are of the form
$I \oplus F_A, I \oplus F_B, I \oplus F_C$, and $I \oplus F_D$, respectively.  
Then $F_A F_B = F_C F_D$.  
By Lemma \ref{lemma:jt-direct-sum}, Lemma \ref{lemma:mult-lefschetz}, and Proposition \ref{prop:finite-dimensional}, respectively, we have
\begin{align*}
\tau(A, B, C, D)
& = \tau( I, I, I, I ) \cdot \tau( F_A, F_B, F_C, F_D ) \\
& = \tau(F_A, F_B, F_C, F_D) \\
& = 1. \qedhere
\end{align*}
\end{proof}

\section{Factorizations of perturbation vectors and joint torsion} \label{sec:factorization}

\subsection{Perturbation vectors}

For an invertible operator $U$ and finite dimensional subspace $V$ of $\mathscr H$, denote by $U|_{V}$ the isomorphism $U|_{V} : V \to U(V)$, and let $\tau(U|_V)$ denote the torsion of this isomorphism.  Also, for a Fredholm operator $T$, let $U|_{\coker \, T}$ denote the isomorphism $U|_{\coker \, T} : \coker \, T \to \coker \, UT$ given by $v + T\mathscr H \mapsto Uv + UT \mathscr H$.  Let $\tau(U|_{\coker \, T} )$ denote the torsion of this isomorphism.

\begin{lemma} \label{lemma:perturbation}
Let $a$ and $u$ be commuting units in $\mathcal L / \mathcal L^1$.  Let $A, D \in \mathcal L$ be lifts of $a$, and suppose $u$ has an invertible lift $U \in \mathcal L$.  Then we have
\[
\sigma_{A, U^{-1}DU} = d(a,u) \cdot \sigma_{A,D} \otimes \tau \left( U^{-1}|_{\ker D} \right) \otimes \tau \left(U^{-1}|_{\coker \, D} \right)^*.
\]
\end{lemma}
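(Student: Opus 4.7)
The plan is to compute $\sigma_{A, U^{-1}DU}$ using Definition \ref{def:perturbation-vectors} with perturbations tailored to those used for $\sigma_{A, D}$, and then compare factor by factor. Assume first that $\ind A = \ind D = 0$; the general case will reduce to this by augmenting with a Fredholm operator $Q$ of index $-\ind A$, using $U \oplus I$ as an invertible lift of $u \oplus 1$, and invoking Lemmas \ref{lemma:di-triviality} and \ref{lemma:di-direct-sum} to see that $d(a \oplus q, u \oplus 1) = d(a, u)$. The identity $I$ restricts trivially to $\ker Q$ and $\coker Q$, so its contribution to the torsion vectors is trivial.

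Fix trace class perturbations $L_A$ and $L_D$ for $A$ and $D$ satisfying the hypotheses of Definition \ref{def:perturbation-vectors}. For $\sigma_{A, U^{-1}DU}$, use $L_A$ for the first operator and $U^{-1} L_D U$ for the second. Since $U$ is invertible and $\mathcal L^1$ is a two-sided ideal, the required properties transfer: $U^{-1}(D + L_D) U$ is invertible, and $U^{-1} L_D U(\ker U^{-1} D U) = U^{-1} L_D(\ker D)$ meets $\im U^{-1} D U = U^{-1} \im D$ only at zero. With these compatible choices, the four factors in the perturbation vector can be compared term by term. The factor $D(L_A)$ and the torsion vector $\tau(\pi L_A)$ are unchanged. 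The factor $D(U^{-1} L_D U)$ equals $D(L_D)$ because the operator appearing in its Fredholm determinant becomes conjugate by $U$, and the Fredholm determinant is invariant under such conjugation.

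For the $\det \Sigma$ term, factor
\[
\Sigma_{A, U^{-1}DU} = (A + L_A)^{-1} U^{-1}(D + L_D) U = \Sigma_{A, D} \cdot (D + L_D)^{-1} U^{-1}(D + L_D) U.
\]
The second factor is the multiplicative commutator of the invertible operators $(D + L_D)^{-1}$ and $U^{-1}$, which are lifts of $a^{-1}$ and $u^{-1}$, respectively. Brown's formula gives the determinant of this commutator as $d(a^{-1}, u^{-1})$, which equals $d(a, u)$ by the bimultiplicativity in Proposition \ref{Steinberg-relations}. For the remaining torsion factor, the commutative square induced by conjugating with $U$ exhibits $\pi'(U^{-1} L_D U)$ as the image of $\pi L_D$ under the natural isomorphisms $U^{-1} \colon \ker D \to \ker U^{-1} D U$ and $U^{-1} \colon \coker D \to \coker U^{-1} D U$. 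A direct calculation in bases compatible with these isomorphisms yields
\[
\tau(\pi' U^{-1} L_D U)^* = \tau(\pi L_D)^* \otimes \tau(U^{-1}|_{\ker D}) \otimes \tau(U^{-1}|_{\coker D})^*
\]
under the natural contraction of determinant lines. Assembling these four comparisons gives the lemma.

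The main obstacle is the last step, which requires careful bookkeeping of how the natural isomorphism $\det H_\bullet(D) \cong \det H_\bullet(U^{-1} D U)$ induced by conjugation corresponds to the tensor product $\tau(U^{-1}|_{\ker D}) \otimes \tau(U^{-1}|_{\coker D})^*$. This ultimately reduces to an unambiguous computation in a one-dimensional model, but the tensor contractions must be tracked carefully to confirm the placement of duals and the absence of extra sign or scalar factors.
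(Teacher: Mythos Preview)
Your proof is correct and follows essentially the same route as the paper: reduce to index zero, choose the perturbation for $U^{-1}DU$ to be the conjugate $U^{-1}L_D U$ of the one used for $D$, factor $\Sigma$ to extract a multiplicative commutator whose determinant is $d(a,u)$ via Brown's formula, and track the torsion of $\pi L_D$ through the conjugation isomorphisms on $\ker D$ and $\coker D$. The paper simplifies slightly by taking the perturbations $F_T$ to be finite-rank isomorphisms onto complements of $\im T$, so the factors $D(F_T)$ are identically $1$ and need not be discussed; your version with general $L_T$ is fine, but note that the conjugation invariance of $D(\,\cdot\,)$ is cleanest to see from the identity $D(L)=\det\big[(D+(I-P)L)^{-1}(D+L)\big]$ in the proof of Proposition~\ref{prop:perturbation-agreement-n=2}, since the formula in Definition~\ref{def:perturbation-vectors} involves the orthogonal complement $(\ker D)^{\perp}$, which is not preserved by conjugation.
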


\begin{proof}

First we note that $A-U^{-1}DU \in \mathcal L^1$, so the perturbation vector $\sigma_{A, U^{-1}DU}$ is defined.  
We begin by proving the lemma in the case when $A$, and hence also $D$, has index zero.
For $T=A,D$, choose a subspace $Z_T$ complementary to $\im \, T$, and choose isomorphisms $F_T: \ker T \to Z_T$.  Let $\pi_T: \mathscr H \to \coker \, T$ be the quotient map.
The operator $U^{-1} F_D U$ defines an isomorphism
\begin{equation} \label{eq:invertible}
U^{-1} F_D U: \ker U^{-1}DU = U^{-1}(\ker D) \to U^{-1}(Z_D)
\end{equation}
and $U^{-1}(Z_D)$ is a subspace complementary to $\im ( U^{-1}DU ) = U^{-1} (\im \, D )$.
We calculate
\begin{align*}
\det (A+F_A)^{-1} (U^{-1}DU + U^{-1} F_D U)
={}& \det (A+F_A)^{-1} (D + F_D) \times \\
{}& \times \det (D + F_D)^{-1} U^{-1} (D + F_D) U \\
={}& \det (A+F_A)^{-1} (D + F_D) \cdot d(a,u).
\end{align*}
The torsion of the isomorphism induced by \eqref{eq:invertible} is given by
\[
\tau(\pi_{U^{-1}DU} U^{-1} F_D U) = \tau(\pi_D F_D) \otimes \tau \left( U^{-1}|_{\ker D} \right)^* \otimes \tau \left(U^{-1}|_{\coker \, D} \right).
\]
Combining the two preceding equations, we calculate
\begin{align*}
\sigma_{A, U^{-1}DU}
={}& \det (A+F_A)^{-1} (U^{-1}DU + U^{-1} F_D U) \cdot \tau(\pi_A F_A) \otimes \tau(\pi U^{-1} F_D U)^* \\
={}& d(a,u) \cdot \det (A+F_A)^{-1} (D + F_D) \cdot \tau(\pi_A F_A) \otimes \tau(\pi_D F_D)^* \otimes \\
{}& \otimes \tau \left( U^{-1}|_{\ker D} \right) \otimes \tau \left(U^{-1}|_{\coker \, D} \right)^* \\
={}&d(a,u) \cdot \sigma_{A,D}
\otimes \tau \left( U^{-1}|_{\ker D} \right) \otimes \tau \left(U^{-1}|_{\coker \, D} \right)^*.
\end{align*}

Now if $\ind \, A$ is not necessarily zero, let $Q$ be any Fredholm operator with $\ind \, Q = - \ind \, A$.  
Let $q$ be the image of in $\mathcal L / \mathcal L^1$ of $Q$.
Let $\tilde A = A \oplus Q$, $\tilde D = D \oplus Q$, and $\tilde U = U \oplus I$.
Then $\tilde A$ and $\tilde D$ have index zero, $\tilde A - \tilde D \in \mathcal L^1(\mathscr H^2)$, and $[\tilde D, \tilde U] \in \mathcal L^1(\mathscr H^2)$.  Hence we calculate
\begin{align*}
\sigma_{\tilde A, \tilde U^{-1} \tilde D \tilde U}
={}& d(\tilde a, \tilde u) \cdot \sigma_{\tilde A, \tilde D} \otimes \tau \left( \tilde U^{-1}|_{\ker \tilde D} \right) \otimes \tau \left( \tilde U^{-1}|_{\coker \, \tilde D} \right)^* \\
={}& d(a,u) \cdot d(q,1) \cdot \sigma_{A,D} \otimes \sigma_{S,S} \otimes \tau \left( U^{-1}|_{\ker D} \right) \otimes \tau \left(U^{-1}|_{\coker \, D} \right)^* \\
={}& d(a,u) \cdot \sigma_{A,D} \otimes \tau \left( U^{-1}|_{\ker D} \right) \otimes \tau \left(U^{-1}|_{\coker \, D} \right)^*.
\end{align*}
On the other hand,
\[
\sigma_{\tilde A, \tilde U^{-1} \tilde D \tilde U} = \sigma_{A, U^{-1}DU}
\]
so we have
\[
\sigma_{A, U^{-1}DU} = d(a,u) \cdot \sigma_{A,D} \otimes \tau \left( U^{-1}|_{\ker D} \right) \otimes \tau \left(U^{-1}|_{\coker \, D} \right)^*. \qedhere
\]
\end{proof}

\begin{lemma} \label{lemma:perturbation2}
Let $a$ and $u$ be units in $\mathcal L / \mathcal L^1$.  Let $A,D \in \mathcal L$ be lifts of $a$, and suppose $u$ has an invertible lift $U\in \mathcal L$.  Then we have
\begin{enumerate}
\item $\sigma_{AU, DU} = \sigma_{A,D} \otimes \tau \left( U^{-1}|_{\ker A} \right)^* \otimes \tau \left(U^{-1}|_{\ker D} \right)$
\item $\sigma_{UA, UD} = \sigma_{A,D} \otimes \tau \left( U|_{\coker \, A} \right) \otimes \tau \left(U|_{\coker \, D} \right)^*$
\end{enumerate}
\end{lemma}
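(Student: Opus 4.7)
The plan is to compute both identities directly from Definition \ref{def:perturbation-vectors}, paralleling the structure of Lemma \ref{lemma:perturbation}. Both parts have the same form; I describe (1) in detail and indicate the variation for (2) at the end. The first step is to stabilize in order to reduce to the case $\ind A = 0$: replace $A, D, U$ by $\tilde A = A \oplus Q$, $\tilde D = D \oplus Q$, $\tilde U = U \oplus I$ for some Fredholm $Q$ of index $-\ind A$. By multiplicativity of perturbation vectors under direct sums (used already in the proof of Lemma \ref{lemma:perturbation}), the $Q$-contributions cancel via $\sigma_{Q,Q} = 1$, and the $\ker Q$-torsion contributions are trivial; thus the identity for the stabilized operators is equivalent to the original identity.

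In the index zero case, I would choose subspaces $Z_A, Z_D$ complementary to $\im A, \im D$ and isomorphisms $F_A: \ker A \to Z_A$, $F_D: \ker D \to Z_D$, extended by zero to $\mathscr H$. The central observation for (1) is that $L := F_A U$ and $L' := F_D U$ satisfy the conditions of Definition \ref{def:perturbation-vectors} for $AU, DU$: note $\ker AU = U^{-1}(\ker A)$, $\im AU = \im A$, $AU + L = (A+F_A)U$ is invertible, and $L(\ker AU) = F_A(\ker A) = Z_A$ is complementary to $\im AU$. The determinant factor collapses by cyclicity:
\[
\det \Sigma = \det (AU+L)^{-1}(DU+L') = \det U^{-1}(A+F_A)^{-1}(D+F_D)U = \det (A+F_A)^{-1}(D+F_D),
\]
exactly the factor that appears in $\sigma_{A,D}$.

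The remaining computation is the torsion of the induced isomorphism $\pi_{AU} \circ L|_{\ker AU}: U^{-1}(\ker A) \to \coker A$. It factors as $\pi_A F_A \circ U|_{U^{-1}(\ker A)}$, so by multiplicativity of torsion under composition its torsion splits as $\tau(U|_{U^{-1}(\ker A)}) \otimes \tau(\pi_A F_A)$, with the natural $\det \ker A$ pairing. Since $U|_{U^{-1}(\ker A)}$ and $U^{-1}|_{\ker A}$ are mutual inverses, Example \ref{example:torsion=determinant} combined with \eqref{eq:duality} identifies $\tau(U|_{U^{-1}(\ker A)})$ with $\tau(U^{-1}|_{\ker A})^*$. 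The analogous calculation for $D$, together with the dualization from the $D$-slot in the definition of $\sigma$, contributes $\tau(U^{-1}|_{\ker D})$; assembling the pieces gives (1). For (2), the variation is to use $L := U F_A$ and $L' := U F_D$; this time $\ker UA = \ker A$ and $\coker UA = \mathscr H/U(\im A)$, the induced isomorphism factors in the opposite order as $U|_{\coker A} \circ \pi_A F_A$, and the resulting factor $\tau(U|_{\coker A})$ appears on the codomain side (hence undualized), with $\tau(U|_{\coker D})^*$ for the $D$-part.

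The main obstacle is bookkeeping: tracking the tensor factors and ensuring the asterisks land correctly. The pattern $\tau(U^{-1}|_{\ker A})^*$ in (1) versus $\tau(U|_{\coker A})$ in (2) reflects which side the auxiliary $U$ sits on in each factorization, filtered through the identification $(\det V^* \otimes \det W)^* \cong \det W^* \otimes \det V$. No conceptually new input beyond the techniques of Lemma \ref{lemma:perturbation} is required.
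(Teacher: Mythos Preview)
Your proposal is correct and follows essentially the same approach as the paper: choose $L = F_A U$ (resp.\ $L = U F_A$) so that the determinant factor reduces by conjugation invariance to the one for $\sigma_{A,D}$, and factor the induced isomorphism $\pi L$ to extract the torsion correction $\tau(U^{-1}|_{\ker A})^*$ (resp.\ $\tau(U|_{\coker A})$), then stabilize for nonzero index. The only cosmetic difference is that you stabilize first and the paper stabilizes last.
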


\begin{proof}
First we note that $AU - DU, UA-UD \in \mathcal L^1$.
As before, let us begin with the case when $A$ has index zero.  Let $F_A, F_D, Z_A, Z_D$ be as in the proof of Lemma \ref{lemma:perturbation}.  Then
\[
\det (A+F_A U)^{-1} (D+F_D U) = \det (A+F_A)^{-1} (D+F_D).
\]
Also, $\ker AU = U^{-1} ( \ker A )$ and $\ker DU = U^{-1} ( \ker D )$, and we calculate
\begin{align*}
\tau(\pi_A F_A U)
={}& \tau(\pi_A F_A) \otimes \tau(U^{-1}|_{\ker A})^* \\
\tau(\pi_D F_D U)
={}& \tau(\pi_D F_D) \otimes \tau(U^{-1}|_{\ker D})^*
\end{align*}
Therefore $\sigma_{AU, DU} = \sigma_{A,D} \otimes \tau \left( U^{-1}|_{\ker A} \right)^* \otimes \tau \left(U^{-1}|_{\ker D} \right).$

In general, if $\ind \, A$ is not necessarily zero, let $Q$, $\tilde A$, $\tilde D$, $\tilde U$ be as in Lemma \ref{lemma:perturbation}.  Then
\[
\sigma_{\tilde A \tilde U, \tilde D \tilde U} = \sigma_{A,D} \otimes \tau \left( U^{-1}|_{\ker A} \right)^* \otimes \tau \left(U^{-1}|_{\ker D} \right).
\]
On the other hand,
\[
\sigma_{\tilde A \tilde U, \tilde D \tilde U} = \sigma_{AU, DU}.
\]
Therefore $\sigma_{AU, DU} = \sigma_{A,D} \otimes \tau \left( U^{-1}|_{\ker A} \right)^* \otimes \tau \left(U^{-1}|_{\ker D} \right).$
The second part is proved similarly.
\end{proof}

\begin{lemma} \label{lemma:perturbation-det-class}
Let $a$ and $u$ be units in $\mathcal L / \mathcal L^1$.  Let $A,D \in \mathcal L$ be lifts of $a$ and let $U \in \mathcal L$ be an invertible lift of $1 \in \mathcal L / \mathcal L^1$ (i.e.~$U$ is an invertible determinant class operator).  Then we have
\begin{enumerate}
\item $\sigma_{A,DU} = \sigma_{A,D} \otimes \tau \left( U^{-1}|_{\ker D} \right)^* \cdot \det U$
\item $\sigma_{A,UD} = \sigma_{A,D} \otimes \tau \left( U|_{\coker D} \right) \cdot \det U$
\end{enumerate}
\end{lemma}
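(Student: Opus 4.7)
The plan is to compute both $\sigma_{A,DU}$ and $\sigma_{A,UD}$ directly from Definition~\ref{def:perturbation-vectors}, exploiting that $U$ being an invertible determinant class operator means $U - I \in \mathcal{L}^1$ and makes $\det U$ meaningful. First I would verify that
\[
A - DU = (A - D) + D(I - U) \in \mathcal{L}^1, \qquad A - UD = (A - D) + (I - U) D \in \mathcal{L}^1,
\]
so that both perturbation vectors are defined.

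Next I would reduce to the case where $A$ (and hence $D$, $DU$, and $UD$) has index zero, by stabilizing $A \mapsto A \oplus Q$ for a Fredholm $Q$ of opposite index and extending $U \mapsto U \oplus I$ on $\mathscr{H} \oplus \mathscr{H}$, exactly as in the proofs of Lemmas~\ref{lemma:perturbation} and~\ref{lemma:perturbation2}. Since perturbation vectors are multiplicative under direct sum and $\sigma_{Q,Q} = 1$, the auxiliary data cancels, and $\det(U \oplus I) = \det U$.

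In the reduced setting, choose isomorphisms $F_A : \ker A \to Z_A$ and $F_D : \ker D \to Z_D$ onto subspaces complementary to $\im A$ and $\im D$, extended by zero to all of $\mathscr{H}$. For part~(1), the key step is to use $L := F_D U$ as the perturbation for $DU$: since $U$ is invertible, $\im DU = \im D$, so the range of $F_D U$ is still $Z_D$, disjoint from $\im DU$, and $DU + F_D U = (D + F_D) U$ is invertible. The determinant factor splits as
\[
\det (A+F_A)^{-1}(DU + F_D U) = \det (A+F_A)^{-1}(D+F_D) \cdot \det U,
\]
producing the $\det U$. The torsion factor $\tau(\pi_{DU} F_D U|_{\ker DU})$ comes from the composition
\[
\ker DU \xrightarrow{U|_{\ker DU}} \ker D \xrightarrow{\pi_D F_D} \coker D = \coker DU,
\]
so it decomposes as $\tau(\pi_D F_D)$ together with the torsion of the identification $U|_{\ker DU}$ of $\ker DU$ with $\ker D$. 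Since $\tau(U|_{\ker DU})$ is dual to $\tau(U^{-1}|_{\ker D})$, taking the dual for the second slot of $\sigma_{A,DU}$ produces the claimed $\tau(U^{-1}|_{\ker D})^*$ factor.

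Part~(2) is entirely symmetric, now placing $U$ on the left: I would take $L := U F_D$ for $UD$, so that $UD + U F_D = U(D+F_D)$ is invertible and $U Z_D$ is disjoint from $\im UD = U(\im D)$. The determinant again contributes $\det U$, while the relevant composition is now
\[
\ker D = \ker UD \xrightarrow{\pi_D F_D} \coker D \xrightarrow{U|_{\coker D}} \coker UD,
\]
so the torsion of $\pi_{UD} U F_D|_{\ker D}$ contributes $\tau(U|_{\coker D})$ in place of the kernel factor. The main bookkeeping obstacle throughout is tracking which determinant line (or its dual) each torsion factor inhabits: $\det H(DU)^*$ differs from $\det H(D)^*$ only in replacing $\det \ker D$ by $\det \ker DU$, and analogously $\det H(UD)^*$ differs from $\det H(D)^*$ by replacing $\det \coker D$ with $\det \coker UD$, which dictates the positions of the asterisks on the torsion-of-isomorphism factors.
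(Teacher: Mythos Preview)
Your proposal is correct and is precisely the approach the paper intends: the paper's own proof consists of the single sentence ``the proof of the lemma then proceeds as in the previous lemma,'' and the previous lemma (Lemma~\ref{lemma:perturbation2}) does exactly what you describe---verify the trace-class condition, reduce to index zero by stabilizing with $Q$ and $U\oplus I$, choose the perturbation $F_D U$ (respectively $U F_D$) so that the $D(L)$ correction is trivial, and then split off the determinant and torsion factors. Your identification of how the $\det U$ factor arises from $(D+F_D)U$ (respectively $U(D+F_D)$) and how the torsion of the induced isomorphism on $\ker D$ (respectively $\coker D$) accounts for the remaining factor matches the computation in Lemma~\ref{lemma:perturbation2} line for line.
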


\begin{proof}
First we note that $A-DU, A-UD \in \mathcal L^1$.
The proof of the lemma then proceeds as in the previous lemma.
\end{proof}

\subsection{Joint torsion}  In this section, we use the previous three lemmas to calculate the joint torsion of quadruples $(A,B,C,D)$ in terms of quadruples modified by an invertible operator.  This will allow us to reduce the calculation of joint torsion to a determinant invariant and a finite dimensional calculation, which has already been dealt with in Section \ref{sec:finite-dimensional}.

\begin{proposition} \label{prop:invertible}
Let $a$ and $b$ be commuting units in $\mathcal L / \mathcal L^1$.
Let $A,D \in \mathcal L$ be lifts of $a$, and let $B,C \in \mathcal L$ be lifts of $b$ such that $AB=CD$.
Suppose $u \in \mathcal L / \mathcal L^1$ has an invertible lift $U\in \mathcal L$.
\begin{enumerate}
\item If $a$ and $u$ commute, then we have
\[
\tau(A,BU, CU, U^{-1}DU) = d(a,u) \cdot \tau(A,B,C,D).
\]
\item If $b$ and $u$ commute, then we have
\[
\tau(UA,B, UCU^{-1}, UD) = d(u,b) \cdot \tau(A,B,C,D).
\]
\end{enumerate}
\end{proposition}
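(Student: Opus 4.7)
The plan is to prove part (1) by expanding both sides of the claimed equality using Definition \ref{def:almost-commuting-joint-torsion}, matching each of the four tensor factors between the original quadruple $(A, B, C, D)$ and the modified quadruple $(A, BU, CU, U^{-1}DU)$, and showing that the extra contributions assemble into the scalar $d(a, u)$. Part (2) will follow by a symmetric argument.

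First I would process the perturbation vectors via the preceding lemmas. Lemma \ref{lemma:perturbation} applied with the given $U$ yields
\[
\sigma_{A, U^{-1}DU} = d(a, u) \cdot \sigma_{A, D} \otimes \tau(U^{-1}|_{\ker D}) \otimes \tau(U^{-1}|_{\coker D})^{*},
\]
and Lemma \ref{lemma:perturbation2}(1), applied with $(B, C)$ playing the role of $(A, D)$, yields
\[
\sigma_{BU, CU} = \sigma_{B, C} \otimes \tau(U^{-1}|_{\ker B})^{*} \otimes \tau(U^{-1}|_{\ker C}).
\]
Next I would exhibit explicit isomorphisms of long exact sequences $\phi \colon \mathcal E_{A, D} \xrightarrow{\sim} \mathcal E_{A, U^{-1}DU}$ and $\phi' \colon \mathcal E_{B, C} \xrightarrow{\sim} \mathcal E_{BU, CU}$. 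The map $\phi$ has components $U^{-1}$ on $\ker B \cap \ker D$, $\ker B$, and $\ker C$; the natural identifications on $\coker B$, $\coker C$, and $H_0$; and the map $[y, z] \mapsto [y, U^{-1}z]$ on $H_1$. The map $\phi'$ has components $U^{-1}$ on $\ker B \cap \ker D$ and $\ker D$; identity on $\ker A$ and $\coker A$; the iso $\coker D \to \coker U^{-1}DU$ induced by $U^{-1}$ on representatives; identity on $H_0$; and the same map on $H_1$ as $\phi$ (which is consistent since both come from the same underlying Koszul complex). Chain-level compatibility is a mechanical check from $AB = CD$ and invertibility of $U$. These isomorphisms transport torsion vectors, so one obtains $\tau(\mathcal E_{A, U^{-1}DU})$ and $\tau(\mathcal E_{BU, CU})$ from $\tau(\mathcal E_{A, D})$ and $\tau(\mathcal E_{B, C})$ by inserting products of $\tau(U^{-1}|_{\cdot})$ factors on the various kernel and cokernel spaces, together with $H_1$-isomorphism factors, with alternating signs coming from the alternating dualizations in the determinant line.

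Finally I would substitute everything into the definition of $\tau(A, BU, CU, U^{-1}DU)$ and verify the cancellations. The sign $(-1)^{\lambda}$ is unchanged because $\dim \ker BU = \dim \ker B$, $\dim \coker CU = \dim \coker C$, $\dim \ker U^{-1}DU = \dim \ker D$, and so on. All $\tau(U^{-1}|_{\cdot})$ corrections arrange into dual pairs via the pairing \eqref{eq:duality}: the $\ker B \cap \ker D$ factors cancel between $\phi$ and $\phi'$; the $\ker B$ and $\ker C$ factors cancel between $\phi$ and $\sigma_{BU, CU}$; the $\ker D$ and $\coker D$ factors cancel between $\phi'$ and $\sigma_{A, U^{-1}DU}$; and the two $H_1$ contributions, being equal, cancel between $\phi$ and $\phi'$. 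What remains is $d(a, u) \cdot \tau(A, B, C, D)$. The hard part will be precisely this bookkeeping: writing every contribution in a common determinant line with consistent sign conventions and confirming each dual pairing cleanly.

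Part (2) is handled by a symmetric argument. One uses Lemma \ref{lemma:perturbation2}(2) to compute $\sigma_{UA, UD}$, and applies Lemma \ref{lemma:perturbation} with $U^{-1}$ in the role of $U$ to obtain $\sigma_{B, UCU^{-1}} = d(b, u^{-1}) \cdot \sigma_{B, C} \otimes \cdots$, noting $d(b, u^{-1}) = d(u, b)$ by the skew-symmetry in Proposition \ref{Steinberg-relations}. The analogous isomorphisms of exact sequences are implemented by left multiplication by $U$, and the cancellation pattern mirrors that of part (1).
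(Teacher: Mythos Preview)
Your proposal is correct and follows essentially the same approach as the paper: compute the perturbation vectors via Lemmas~\ref{lemma:perturbation} and~\ref{lemma:perturbation2}, transport the torsion vectors $\tau(\mathcal E_{A,D})$ and $\tau(\mathcal E_{B,C})$ through the isomorphism $(I\oplus U^{-1})|_{H_1}$ and the maps $U^{-1}$ on the relevant kernels/cokernels, observe that $\lambda$ is unchanged, and verify that all $\tau(U^{-1}|_{\cdot})$ factors pair off, leaving only $d(a,u)$. The paper carries this out by choosing explicit generators $t_0,\dots,t_5$ rather than naming the isomorphisms $\phi,\phi'$, but the computation and the cancellation pattern you describe are identical to equations~\eqref{eq:perturbation}, \eqref{eq:torsion-AD}, and~\eqref{eq:torsion-BC}.
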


\begin{proof}
First we note that $A (BU) = (CU) (U^{-1}DU)$ and $A - U^{-1}DU \in \mathcal L^1$, and $BU-CU \in \mathcal L^1$.  Hence the joint torsion in (1) is defined, and similarly for (2).  Let us first prove (1).  By Lemmas \ref{lemma:perturbation} and \ref{lemma:perturbation2}, we have the factorization
\begin{multline} \label{eq:perturbation}
\sigma_{A,U^{-1}DU} \otimes \sigma_{BU,CU} = d(a,u) \cdot \sigma_{A,D} \otimes \sigma_{B,C}
\otimes \tau \left( U^{-1}|_{\ker B} \right)^*
\otimes \tau \left(U^{-1}|_{\ker C} \right) \otimes \\
\otimes \tau \left( U^{-1}|_{\ker D} \right)
\otimes \tau \left(U^{-1}|_{\coker \, D} \right)^*.
\end{multline}

To calculate $\tau(\mathcal E_{A, D})$, we choose generators $t_0, \dots, t_5$ appropriately:

\begin{itemize}
\item $t_0 \in \det (\ker B \cap \ker D)$
\item $t_0 \wedge t_1 \in \det \ker B$
\item $D t_1 \wedge t_2 \in \det \ker C$
\item $\iota_{2*} t_2 \wedge t_3 \in \det H_1(A,B,C,D)$
\item $\pi_{1*} t_3 \wedge t_4 \in \det \coker \, B$
\item $A t_4 \wedge t_5 \in \det \coker \, C$
\item $\pi t_5 \in \det \, (\mathscr H / (A \mathscr H + C \mathscr H ) )$
\end{itemize}
Then
$\tau(\mathcal E_{A, D}) = t_0^* \otimes (t_0 \wedge t_1) \otimes \dots \otimes (A t_4 \wedge t_5) \otimes (\pi t_5)^*.$
Here, $H_1 = H_1(A,B,C,D)$ is the first Koszul homology space
\[
H_1 = \frac{ \{ (y,z) \, | \, Ay = -Cz \} }{ \{ (-Bx, Dx) \, | \, x \in \mathscr H \} }.
\]
The map $\iota_{2*}$ is induced by inclusion into the second coordinate, and $\pi_{1*}$ is induced by projection onto the first coordinate.  Let $v \in \ker C$ be such that $v \notin D(\ker C)$.  Then $\iota_{2*} v = [(0,v)] \neq 0$ in $H_1$, so we can take $t_2$ to be the product of sufficiently many such vectors, say $\wedge_i v_i$.  On the other hand, let $w \notin \im \, B$ be such that $Aw = Cu \in \im \, C$ for some $u$.  Then $[(w, -u)] \neq 0$ in $H_1$ and $\pi_{1*} [(w,-u)]$, so we can take $t_3$ to be the product of sufficiently many such vectors, say $\wedge_j [(w_j, -u_j)]$.  Of course, $\tau(\mathcal E_{A, D})$ is independent of these specific choices.

Now we would like to calculate the torsion vectors $\tau(\mathcal E_{A, U^{-1}DU})$ and $\tau(\mathcal E_{BU, CU})$ in terms of $\tau(\mathcal E_{A, D})$ and $\tau(\mathcal E_{B, C})$.  The only potential difficulty is in the $H_1$ position, so let us compare $H_1' = H_1(A,BU,CU,U^{-1}DU)$ with the discussion of the previous paragraph.  First,
\[
H_1' = \frac{ \{ (y,U^{-1}z) \, | \, Ay = -Cz \} }{ \{ (-Bx, U^{-1}Dx) \, | \, x \in \mathscr H \} }
\]
The invertible operator $I \oplus U^{-1}$ on $\mathscr H \oplus \mathscr H$ induces an isomorphism from $H_1$ onto $H_1'$, which we denote by $( I \oplus U^{-1} )|_{H_1}$.   Moreover, we have
\begin{align*}
U^{-1} (D t_1 \wedge t_2)
& = U^{-1} D t_1 \wedge U^{-1} t_2 \\
& \in \det \ker CU \\
( I \oplus U^{-1} )|_{H_1} (\iota_{2*} t_2 \wedge t_3)
& = \left( \wedge_i [(0,U^{-1} v_i)] \right) \wedge \left( \wedge_j [(w_j, -U^{-1}u_j)] \right) \\
& = \iota_{2*} U^{-1} t_2 \wedge ( I \oplus U^{-1} )|_{H_1} t_3 \\
& \in \det H_1' \\
\pi_{1*} ( I \oplus U^{-1} )|_{H_1} t_3 \wedge t_4
& = \pi_{1*} t_3 \wedge t_4 \\
& \in \det \coker \, BU
\end{align*}
Hence, we calculate
\begin{align}
\tau(\mathcal E_{A, U^{-1}DU}) \notag
={}& (U^{-1}t_0)^* \otimes (U^{-1}t_0 \wedge U^{-1}t_1) \otimes (U^{-1}Dt_1 \wedge U^{-1}t_2)^* \otimes \\ \notag
{}& \otimes ((I \oplus U^{-1} )|_{H_1} \iota_{2*} t_2 \wedge t_3) \otimes (-\pi_{1*} t_3 \wedge t_4)^* \otimes (-A t_4 \wedge t_5) \otimes (\pi t_5)^*\\ \label{eq:torsion-AD}
={}& \tau(\mathcal E_{A, D}) \otimes
\tau((I\oplus U^{-1})|_{H_1} ) \otimes \tau \left( U^{-1}|_{\ker B} \right)
\otimes \tau \left(U^{-1}|_{\ker C} \right)^* \otimes \\ \notag
{}& \otimes \tau \left(U^{-1}|_{\ker B \cap \ker D} \right)^*.
\end{align}
Similarly, we find
\begin{align} \label{eq:torsion-BC}
\tau(\mathcal E_{BU, CU}) 
={}& \tau(\mathcal E_{B, C}) \otimes \tau((I\oplus U^{-1})|_{H_1} ) \otimes \tau \left( U^{-1}|_{\ker D} \right) \otimes \tau \left(U^{-1}|_{\coker \, D} \right)^* \otimes \\ \notag
{}& \otimes \tau \left(U^{-1}|_{\ker B \cap \ker D} \right)^*.
\end{align}

Notice that $\lambda = \lambda(A,B,C,D)$ is the same as $\lambda(A,BU,CU,U^{-1}DU)$ in Definition \ref{def:almost-commuting-joint-torsion} since all homology spaces have the same dimension.  Combining \eqref{eq:perturbation}, \eqref{eq:torsion-AD}, and \eqref{eq:torsion-BC} yields
\begin{align*}
\tau(A, BU, CU, U^{-1}DU)
& = (-1)^{\lambda} \tau(\mathcal E_{A, U^{-1}DU}) \otimes \tau(\mathcal E_{BU, CU})^* \otimes \sigma_{A,U^{-1}DU} \otimes \sigma_{BU,CU} \\
& = (-1)^{\lambda} d(a,u) \cdot \tau(\mathcal E_{A, D}) \otimes \tau(\mathcal E_{B, C})^* \otimes \sigma_{A,D} \otimes \sigma_{B,C} \\
& = d(a,u) \cdot \tau(A,B,C,D).
\end{align*}
This completes the proof of part 1.  The second part follows by a similar calculation.
\end{proof}

\begin{proposition} \label{prop:invertible2}
Let $A,B,C$, and $D$ be Fredholm operators with $AB=CD$, $A-D \in \mathcal L^1$, and $B-C \in \mathcal L^1$.
For any invertible determinant class operator $U$, we have
\[
\tau(A,B, CU, U^{-1}D) = \tau(A,B,C,D).
\]
\end{proposition}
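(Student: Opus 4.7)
The plan is to closely parallel the proof of Proposition \ref{prop:invertible}, substituting Lemma \ref{lemma:perturbation-det-class} in place of Lemmas \ref{lemma:perturbation} and \ref{lemma:perturbation2}. Here $U$ has trivial image $u = 1 \in \mathcal L / \mathcal L^1$, so the determinant invariant factors $d(a, u)$ and $d(u, b)$ that appeared in Proposition \ref{prop:invertible} collapse to $1$; in their place appear the $\det U^{\pm 1}$ factors produced by Lemma \ref{lemma:perturbation-det-class}, which must cancel against each other. As a preliminary I would verify that $\tau(A, B, CU, U^{-1}D)$ is defined: the identity $A \cdot B = (CU)(U^{-1}D)$ is trivial, and both $A - U^{-1}D = (A - D) + (I - U^{-1})D$ and $B - CU = (B - C) - C(U - I)$ lie in $\mathcal L^1$, since $A - D$, $B - C$, $U - I \in \mathcal L^1$.

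Next I would apply Lemma \ref{lemma:perturbation-det-class}(2) with $U$ replaced by $U^{-1}$, and Lemma \ref{lemma:perturbation-det-class}(1) applied to $(B, C)$, to obtain factorizations of $\sigma_{A, U^{-1}D}$ and $\sigma_{B, CU}$ in terms of $\sigma_{A, D}$ and $\sigma_{B, C}$. In the product $\sigma_{A, U^{-1}D} \otimes \sigma_{B, CU}$, the factors $\det U^{-1}$ and $\det U$ cancel, leaving $\sigma_{A, D} \otimes \sigma_{B, C}$ tensored with correction terms involving $\tau(U^{-1}|_{\coker D})$ and $\tau(U^{-1}|_{\ker C})$.

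Then I would compute $\tau(\mathcal E_{A, U^{-1}D})$ and $\tau(\mathcal E_{B, CU})$ in terms of $\tau(\mathcal E_{A, D})$ and $\tau(\mathcal E_{B, C})$. The relevant homology spaces are either literally unchanged or canonically isomorphic via $U^{-1}$: specifically, $\ker CU = U^{-1}(\ker C)$, $\coker U^{-1}D \cong \coker D$ via $U^{-1}$, and $H_1(A, B, CU, U^{-1}D) \cong H_1(A, B, C, D)$ via $I \oplus U^{-1}$, while the spaces $\ker B \cap \ker D$, $\ker B$, $\ker D$, $\ker A$, $\coker B$, $\coker A$, $\coker CU = \coker C$, and $\mathscr H / (A \mathscr H + CU \mathscr H) = \mathscr H / (A \mathscr H + C \mathscr H)$ are literally unchanged. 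The long exact sequences $\mathcal E_{A, U^{-1}D}$ and $\mathcal E_{A, D}$ are thus isomorphic as exact sequences of finite-dimensional vector spaces, with the isomorphism acting nontrivially only at the $\ker C$ and $H_1$ positions; similarly $\mathcal E_{B, CU}$ and $\mathcal E_{B, C}$ are isomorphic with nontrivial action at the $\coker D$ and $H_1$ positions. Choosing generators compatibly at each position, as in the calculation in the proof of Proposition \ref{prop:invertible}, yields factorizations of the new torsion vectors in terms of the original ones, modified by correction factors built from $\tau(U^{-1}|_{\ker C})$, $\tau(U^{-1}|_{\coker D})$, and $\tau((I \oplus U^{-1})|_{H_1})$.

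Finally I would combine the pieces. The sign $\lambda$ is unchanged because all homology dimensions coincide. In the joint torsion product $\tau(\mathcal E_{A, U^{-1}D}) \otimes \tau(\mathcal E_{B, CU})^* \otimes \sigma_{A, U^{-1}D} \otimes \sigma_{B, CU}$, the factor $\tau((I \oplus U^{-1})|_{H_1})$ appears once from $\tau(\mathcal E_{A, U^{-1}D})$ and once dualized from $\tau(\mathcal E_{B, CU})^*$, so the two cancel via the canonical pairing. The remaining corrections involving $\tau(U^{-1}|_{\ker C})$ and $\tau(U^{-1}|_{\coker D})$ pair up similarly between the long exact sequence torsions and the perturbation vectors, each appearing exactly once dualized and once not. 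This yields $\tau(A, B, CU, U^{-1}D) = \tau(A, B, C, D)$. The hard part will be the careful bookkeeping of duals and signs in the last two steps, to confirm that each correction factor appears exactly twice with opposite dualization so as to cancel via the canonical pairing, in complete analogy with the cancellation in the proof of Proposition \ref{prop:invertible}.
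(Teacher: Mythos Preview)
Your proposal is correct and follows essentially the same approach as the paper: verify well-definedness, use Lemma~\ref{lemma:perturbation-det-class} so that the $\det U$ and $\det U^{-1}$ factors cancel in the product of perturbation vectors, compare the exact sequences $\mathcal E_{A,U^{-1}D}$ and $\mathcal E_{B,CU}$ with $\mathcal E_{A,D}$ and $\mathcal E_{B,C}$ via the isomorphisms $U^{-1}$ on $\ker C$ and $\coker D$ and $(I\oplus U^{-1})|_{H_1}$, and then cancel all correction factors in the joint torsion. The paper carries out exactly this computation with explicit generators $t_0,\dots,t_5$ chosen as in Proposition~\ref{prop:invertible}.
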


\begin{proof}
First we note that $I-U^{-1} = (U-I) U^{-1} \in \mathcal L^1$, so $A-U^{-1}D \in \mathcal L^1$ and $B-CU \in \mathcal L^1$.
We calculate perturbation vectors as in \eqref{eq:perturbation}, this time using using Lemma \ref{lemma:perturbation-det-class}.  Indeed,
\begin{equation} \label{eq:perturbation2}
\sigma_{A,U^{-1}D} \otimes \sigma_{B,CU} = \sigma_{A,D} \otimes \sigma_{B,C} \otimes \tau(U^{-1}|_{\coker \, D})^* \otimes \tau(U|_{\ker \, C})^*. \\
\end{equation}

Next pick generators $t_0, \dots, t_5$ as in Proposition \ref{prop:invertible}.  As before, we find that
\begin{equation*}
H_1(A,B, CU, U^{-1}D) = \frac{ \{ (y,U^{-1}z) \, | \, Ay = -Cz \} }{ \{ (-Bx, U^{-1}Dx) \, | \, x \in \mathscr H \} }
\end{equation*}
The invertible operator $I \oplus U^{-1}$ on $\mathscr H \oplus \mathscr H$ induces an isomorphism from $H_1(A,B,C,D)$ onto $H_1(A,B, CU, U^{-1}D)$, which we denote by $( I \oplus U^{-1} )|_{H_1}$.
Moreover, we have
\begin{align*}
U^{-1} (D t_1 \wedge t_2)
& = U^{-1} D t_1 \wedge U^{-1} t_2 \\
& \in \det \ker CU \\
( I \oplus U^{-1} )|_{H_1} (\iota_{2*} t_2 \wedge t_3)
& = \iota_{2*} U^{-1} t_2 \wedge ( I \oplus U^{-1} )|_{H_1} t_3 \\
& \in \det H_1(A,B, CU, U^{-1}D) \\
\pi_{1*} ( I \oplus U^{-1} )|_{H_1} t_3 \wedge t_4
& = \pi_{1*} t_3 \wedge t_4 \\
& \in \det \coker \, B.
\end{align*}
Hence, we calculate
\begin{align} \label{eq:torsion-AD2}
\tau(\mathcal E_{A, U^{-1}D})
={}& \tau(\mathcal E_{A, D}) \otimes
\tau((I\oplus U^{-1})|_{H_1} )
\otimes \tau \left(U^{-1}|_{\ker C} \right)^*
\otimes \tau \left(U^{-1}|_{\ker B \cap \ker D} \right)^* \\
\tau(\mathcal E_{B, CU}) \label{eq:torsion-BC2}
={}& \tau(\mathcal E_{B, C}) \otimes \tau((I\oplus U^{-1})|_{H_1} ) \otimes \tau \left(U^{-1}|_{\coker \, D} \right)^* \otimes
\tau \left(U^{-1}|_{\ker B \cap \ker D} \right)^*.
\end{align}
Combining equations \eqref{eq:perturbation2}, \eqref{eq:torsion-AD2}, and \eqref{eq:torsion-BC2}, we find that
$\tau(A,B, CU, U^{-1}D) = \tau(A,B,C,D)$,
as desired.
\end{proof}

\subsection{A proof of Theorem \ref{thm:jt=di}}

We begin by using the propositions of the previous section to establish Theorem \ref{thm:jt=di} in the case of index zero operators.  The general case will then follow quickly.

\begin{proposition} \label{prop:jt=di-index-zero}
Let $a$ and $b$ be commuting units in $\mathcal L / \mathcal L^1$.  Suppose $a$ has index zero lifts $A,D \in \mathcal L$, and suppose $b$ has index zero lifts $B,C \in \mathcal L$, such that $AB=CD$.
Then $d(a,b) = \tau(A,B,C,D).$
\end{proposition}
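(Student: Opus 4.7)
The plan is to use the transformation formulas of Propositions \ref{prop:invertible} and \ref{prop:invertible2} to reduce the quadruple $(A,B,C,D)$ to one in which every entry differs from the identity by a finite rank operator, and then invoke Corollary \ref{corollary:identity-plus-finite-rank}. All determinant-invariant factors picked up along the way will collapse to the single factor $d(a,b)$.

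Since $A$ and $B$ have index zero, choose finite rank operators $F_A$ and $F_B$ such that $\tilde A = A + F_A$ and $\tilde B = B + F_B$ are invertible lifts of $a$ and $b$ respectively. First apply Proposition \ref{prop:invertible}(2) with the invertible operator $U = \tilde A^{-1}$, whose image $a^{-1}$ commutes with $b$:
\[
\tau(\tilde A^{-1} A,\, B,\, \tilde A^{-1} C \tilde A,\, \tilde A^{-1} D) = d(a^{-1}, b)\, \tau(A,B,C,D) = d(a,b)^{-1}\, \tau(A,B,C,D).
\]
Since the first entry is now a lift of $1$, applying Proposition \ref{prop:invertible}(1) with $U = \tilde B^{-1}$ contributes the factor $d(1, b^{-1}) = 1$ by Lemma \ref{lemma:di-triviality}, and yields
\[
\tau(A'',B'',C'',D'') = d(a,b)^{-1}\, \tau(A,B,C,D),
\]
where $A'' = \tilde A^{-1} A$, $B'' = B \tilde B^{-1}$, $C'' = \tilde A^{-1} C \tilde A \tilde B^{-1}$, and $D'' = \tilde B \tilde A^{-1} D \tilde B^{-1}$. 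Here $A''$ and $B''$ are finite rank perturbations of $I$, while $C''$ and $D''$ are trace class perturbations of $I$ (each lifts the identity element of $\mathcal L / \mathcal L^1$, since $a$ and $b$ commute in the quotient).

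To further reduce $C''$ and $D''$ to finite rank perturbations of $I$, I will apply Proposition \ref{prop:invertible2}. Observe that $C''$ is Fredholm of index zero, so there is a finite rank operator $F_C$ with $\tilde C'' = C'' + F_C$ invertible; moreover $\tilde C'' - I \in \mathcal L^1$, so $U = (\tilde C'')^{-1}$ is invertible and determinant class. Proposition \ref{prop:invertible2} then gives
\[
\tau(A'',\, B'',\, C''(\tilde C'')^{-1},\, \tilde C'' D'') = \tau(A'', B'', C'', D''),
\]
and a direct computation shows that $C''(\tilde C'')^{-1} = I - F_C(\tilde C'')^{-1}$ and $\tilde C'' D'' = A'' B'' + F_C D''$ are both finite rank perturbations of $I$ (the latter because $A''B''$ is a product of finite rank perturbations of $I$). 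Corollary \ref{corollary:identity-plus-finite-rank} then forces this joint torsion to equal $1$, and chaining the equalities together yields $\tau(A,B,C,D) = d(a,b)$.

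The main bookkeeping task will be verifying, at each application of the propositions, that the triple of hypotheses $AB = CD$, $A - D \in \mathcal L^1$, and $B - C \in \mathcal L^1$ is preserved, together with the commutation conditions required by Proposition \ref{prop:invertible}. These verifications are routine because $a$ and $b$ commute in the quotient and each perturbation introduced is finite rank; there is no conceptual obstacle after that.
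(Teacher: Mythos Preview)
Your proof is correct and follows essentially the same route as the paper's: both use Proposition~\ref{prop:invertible} twice (with invertible parametrices $\tilde A^{-1}$ and $\tilde B^{-1}$) to reduce to a quadruple whose first two entries are finite rank perturbations of $I$, then use Proposition~\ref{prop:invertible2} once to make the remaining two entries finite rank perturbations of $I$, and finish with Corollary~\ref{corollary:identity-plus-finite-rank}. The only cosmetic difference is that the paper perturbs $D'$ (choosing $W$ a parametrix for $D'$) while you perturb $C''$; and your explicit verification $\tilde C'' D'' = A''B'' + F_C D''$ makes the finite-rank conclusion slightly more transparent than the paper's one-line remark.
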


\begin{proof}
Let $U$ and $V$ be invertible parametrices for $A$ and $B$, respectively, modulo finite rank operators.  
For example, we can take $U = (A+F)^{-1}$ for any suitable finite rank operator $F$.  
The images of $U$ and $V$ in $\mathcal L / \mathcal L^1$ are $a^{-1}$ and $b^{-1}$, respectively.
By Proposition \ref{prop:invertible}, we calculate
\begin{align*}
\tau(A,B,C,D)
& = d(a^{-1},b)^{-1} \cdot \tau(UA, B, UCU^{-1}, UD) \\
& = d(a^{-1},b)^{-1} \cdot d(1,b^{-1})^{-1} \cdot \tau(UA, BV, UCU^{-1}V, V^{-1}UDV).
\end{align*}
We have $d(a^{-1},b)^{-1} = d(a,b)$ and $d(1,b^{-1})^{-1} = 1$ by Proposition \ref{Steinberg-relations} and Lemma \ref{lemma:di-triviality}.  Thus
\[
\tau(A,B,C,D) = d(a,b) \cdot \tau(A', B', C', D')
\]
where
$A'=UA$ and $B'=BV$ differ from the identity by finite rank operators, and $C'=UCU^{-1}V$ and $D'=V^{-1}UDV$ differ from the identity by trace class operators.
Let $W$ be an invertible parametrix for $D'$ modulo finite rank, and hence an invertible determinant class operator.  Then by Proposition \ref{prop:invertible2}, we have
\[
\tau(A', B', C', D') = \tau(A', B', C'W^{-1}, WD').
\]
Since $A', B'$, and $WD'$ differ from the identity by finite rank operators, so does $C'W^{-1}$.
Therefore $\tau(A', B', C'W^{-1}, WD') = 1$ by Corollary \ref{corollary:identity-plus-finite-rank}, and the result follows.  
\end{proof}

We are now in a position to prove the equality of joint torsion and the determinant invariant:
\begin{theorem1}
Let $a$ and $b$ be commuting units in $\mathcal L / \mathcal L^1$.
Let $A, D \in \mathcal L$ be lifts of $a$, and let $B, C \in \mathcal L$ be lifts of $b$ such that $AB=CD$.
Then $d(a,b) = \tau(A,B,C,D)$.
\end{theorem1}

\begin{proof}
Pick any Fredholm operators $Q$ and $R$ with $\ind \, Q = -\ind \, A$ and $\ind \, R = -\ind \, B$.  Let $q$ and $r$ be the images in $\mathcal L / \mathcal L^1$ of $Q$ and $R$, respectively.  Consider the index zero operators $\tilde A = A \oplus Q \oplus I$, $\tilde B = B \oplus I \oplus R$, $\tilde C = C \oplus I \oplus R$, and $\tilde D = D \oplus Q \oplus I$.
By Lemmas \ref{lemma:jt-direct-sum} and \ref{lemma:mult-lefschetz}, we have
\begin{align*}
\tau( \tilde A, \tilde B, \tilde C, \tilde D)
& = \tau(A,B,C,D) \cdot \tau(Q,I,I,Q) \cdot \tau(I,R,R,I) \\
& = \tau(A,B,C,D).
\end{align*}
The operators $\tilde A$, $\tilde B$, $\tilde C$, and $\tilde D$ on the left hand side all have index zero, so by Proposition \ref{prop:jt=di-index-zero}, we have $\tau( \tilde A, \tilde B, \tilde C, \tilde D) = d( a \oplus q \oplus 1, b \oplus 1 \oplus r)$.  By Lemmas \ref{lemma:di-direct-sum} and \ref{lemma:di-triviality}, we have
\begin{align*}
d( a \oplus q \oplus 1, b \oplus 1 \oplus r)
& = d(a,b) \cdot d(q,1) \cdot d(1,r) \\
& = d(a,b).
\end{align*}
Therefore $\tau(A,B,C,D) = d(a,b)$.
\end{proof}

\begin{notation}
By Theorem \ref{thm:jt=di}, the joint torsion $\tau(A,B,C,D)$ depends only on the images of $A$ and $B$ in $\mathcal L / \mathcal L^1$, so we are justified in writing
\[
\tau(A,B,C,D) = \tau(a,b)
\]
where $a$ and $b$ are the images in $\mathcal L / \mathcal L^1$ of $A$ and $B$, respectively.
\end{notation}

\subsection{Consequences of Theorem \ref{thm:jt=di}}

Joint torsion enjoys the same properties as the determinant invariant by Theorem \ref{thm:jt=di}.  In particular, we find that joint torsion satisfies the Steinberg relations of Proposition \ref{Steinberg-relations}.

\begin{corollary}
Whenever the joint torsion numbers below are defined, we have the following:
\begin{enumerate}
\item $\tau(a_1 a_2, b) = \tau(a_1, b) \, \tau(a_2, b)$
\item $\tau(a, b) = \tau(b, a)^{-1}$
\item $\tau(a, 1-a) = 1$.
\end{enumerate}
\end{corollary}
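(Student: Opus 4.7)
The plan is to reduce each of the three identities to the corresponding Steinberg relation for the determinant invariant. By Theorem \ref{thm:jt=di}, whenever a joint torsion $\tau(A,B,C,D)$ is defined, it equals $d(a,b)$, where $a,b$ are the images of $A,B$ in $\mathcal L/\mathcal L^1$. This is precisely the content of the Notation just preceding the corollary, which licenses writing $\tau(a,b)$ unambiguously regardless of the chosen lifts.

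Once this reformulation is in hand, each claim becomes a direct restatement of the corresponding item in Proposition \ref{Steinberg-relations} applied to $d(a,b)$. For (1), bimultiplicativity of the determinant invariant gives $d(a_1 a_2, b) = d(a_1, b) \, d(a_2, b)$, which translates to the identity $\tau(a_1 a_2, b) = \tau(a_1,b) \, \tau(a_2, b)$. For (2), skew-symmetry gives $d(a,b) = d(b,a)^{-1}$ and hence $\tau(a,b) = \tau(b,a)^{-1}$. For (3), the Steinberg relation $d(a, 1-a) = 1$ transfers immediately.

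The only verification needed is that whenever each of the joint torsion numbers in a given identity is defined in the sense of Definition \ref{def:almost-commuting-joint-torsion}, the corresponding determinant invariants are also defined; this is immediate, since $d(a,b)$ makes sense for any pair of commuting units in $\mathcal L/\mathcal L^1$ without requiring auxiliary operators of the form $C,D$. Consequently there is no substantive obstacle: the entire content of the corollary is extracted from Theorem \ref{thm:jt=di} together with Proposition \ref{Steinberg-relations}, and the proof amounts to three one-line invocations.
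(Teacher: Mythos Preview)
Your proposal is correct and matches the paper's approach exactly: the paper does not give a separate proof but simply remarks that joint torsion inherits the Steinberg relations of Proposition~\ref{Steinberg-relations} via Theorem~\ref{thm:jt=di}, which is precisely what you spell out.
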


Let $A_\lambda$ and $B_\lambda$ be continuous families of almost commuting Fredholm operators.  Assume that for each $\lambda$, the joint torsion $\tau(A_\lambda, B_\lambda)$ is defined, i.e.~there exist auxiliary operators $C_\lambda$ and $D_\lambda$ such that $A_\lambda - D_\lambda \in \mathcal L^1$, $B_\lambda - C_\lambda \in \mathcal L^1$, and $A_\lambda B_\lambda = C_\lambda D_\lambda$.
Let $a_\lambda$ and $b_\lambda$ be the images in $\mathcal L / \mathcal L^1$ of $A_\lambda$ and $B_\lambda$, respectively.
Then $\tau(a_\lambda, b_\lambda) = d(a_\lambda, b_\lambda)$, and the latter is norm continuous.  Thus we immediately obtain the following result.

\begin{corollary}
The function $\tau(a_\lambda, b_\lambda)$ is norm continuous.
\end{corollary}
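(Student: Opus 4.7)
The plan is to invoke Theorem \ref{thm:jt=di} directly, which identifies $\tau(a_\lambda, b_\lambda)$ with the determinant invariant $d(a_\lambda, b_\lambda)$. So the corollary reduces to establishing norm continuity of $d(a_\lambda, b_\lambda)$, and essentially the entire content of the corollary lies in this identification — joint torsion \emph{a priori} depends on the individual kernels and cokernels of $A_\lambda$ and $B_\lambda$, which can jump in dimension under arbitrarily small norm perturbations, so continuity is not obvious from the definition of $\tau(A,B,C,D)$.

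For the norm continuity of $d(a_\lambda, b_\lambda)$, I would proceed as follows. Since $A_\lambda$ and $B_\lambda$ are Fredholm, for each fixed $\lambda_0$ there is a norm neighborhood of $\lambda_0$ in which $A_\lambda$ and $B_\lambda$ remain Fredholm with the same index. Choose fixed parametrices $Q$ and $R$ for $A_{\lambda_0}$ and $B_{\lambda_0}$ modulo finite rank operators; then $Q$ and $R$ remain parametrices (modulo trace class) for all $A_\lambda$ and $B_\lambda$ with $\lambda$ in a small norm neighborhood of $\lambda_0$. Using Brown's formula for the determinant invariant when lifts are invertible (applied on a larger Hilbert space after stabilizing by $Q$ and $R$ to reduce to index zero, as in the proof of Theorem \ref{thm:jt=di}), one obtains an expression for $d(a_\lambda, b_\lambda)$ as a Fredholm determinant $\det(\tilde A_\lambda \tilde B_\lambda \tilde A_\lambda^{-1} \tilde B_\lambda^{-1})$ of an invertible determinant class operator depending norm continuously on $\lambda$. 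Norm continuity of the Fredholm determinant on invertible determinant class operators (with respect to the trace norm on perturbations, which is controlled by the operator norm together with the fixed trace class commutator structure) then yields continuity of $d(a_\lambda, b_\lambda)$.

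The main obstacle, as noted above, is conceptual rather than technical: reconciling the manifest discontinuity of the torsion-vector construction with the desired norm continuity. Theorem \ref{thm:jt=di} removes this obstacle entirely, since the right-hand side $d(a,b)$ is defined in terms of $K$-theoretic data of the quotient $\mathcal L / \mathcal L^1$, which is insensitive to jumps in kernel dimension of particular lifts. Once this conceptual leap is made, the remaining argument is a standard continuity statement for Fredholm determinants.
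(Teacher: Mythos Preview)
Your approach matches the paper's exactly: the paper's entire argument is the sentence preceding the corollary, namely that $\tau(a_\lambda,b_\lambda)=d(a_\lambda,b_\lambda)$ by Theorem~\ref{thm:jt=di} and that the determinant invariant is norm continuous, which it asserts without further justification. Your elaboration on the continuity of $d$ goes beyond what the paper provides.

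One caution on that elaboration: the claim that a fixed finite-rank parametrix $Q$ for $A_{\lambda_0}$ remains a parametrix \emph{modulo trace class} for nearby $A_\lambda$ is not correct under operator-norm continuity alone, since $QA_\lambda - I = (QA_{\lambda_0}-I) + Q(A_\lambda - A_{\lambda_0})$ and the second term has no reason to be trace class. To make the Fredholm-determinant argument go through one needs an additional hypothesis such as trace-norm continuity of $\lambda\mapsto[A_\lambda,B_\lambda]$ (so that $\tilde A_\lambda \tilde B_\lambda \tilde A_\lambda^{-1}\tilde B_\lambda^{-1}-I$ varies continuously in $\mathcal L^1$); the paper leaves the precise meaning of ``continuous families of almost commuting Fredholm operators'' implicit, and your parenthetical about ``the fixed trace class commutator structure'' gestures at this without pinning it down.
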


Recall, however, that $\tau(a_\lambda, b_\lambda)$ is defined in terms of finite dimensional homology spaces $H_\bullet(A_\lambda)$, $H_\bullet(B_\lambda)$, $H_\bullet(C_\lambda)$, $H_\bullet(D_\lambda)$, and $H_\bullet(A_\lambda, B_\lambda, C_\lambda, D_\lambda)$.  In particular, the dimensions of these spaces are by no means continuous.  Thus, the continuity of joint torsion can be seen as analogous to the continuity of the Fredholm index.

Now let us record a number of properties of the determinant invariant consequent to Theorem \ref{thm:jt=di}.  Although the determinant invariant is defined as an infinite dimensional Fredholm determinant, Theorem \ref{thm:jt=di} shows that for commuting operators, it can actually be calculated in terms of finite dimensional data.  In fact, it can be computed as a determinant on a finite dimensional space.
Indeed, let $a$ and $b$ be commuting units in $\mathcal L / \mathcal L^1$ with commuting lifts $A$ and $B$.  Let $\mathcal E_{A+}$ be the direct sum of even terms in the exact sequence $\mathcal E_A$, and similarly for $\mathcal E_{A-}$.  Then $\mathcal E_{A+} \oplus \mathcal E_{A-}$ is a $\mathbf Z_2$-graded vector space with differentials $D_{A+}: \mathcal E_{A+} \to \mathcal E_{A-}$ and $D_{A-}: \mathcal E_{A-} \to \mathcal E_{A+}$.  For any algebraic pseudoinverse $D_{A-}^\dagger$ of $D_{A-}$, we can calculate $\tau(\mathcal E_A)$ in terms of the isomorphism $D_{A+} + D_{A-}^\dagger$ of finite dimensional spaces.  By Definition 3.3.2 and Equation 3.4 of \cite{Kaad} we have the following result.

\begin{corollary} \label{corollary:jt-finite-dimensional}
If $a$ and $b$ are units in $\mathcal L / \mathcal L^1$ with commuting lifts $A$ and $B$, then
\[
d(a,b) = (-1)^{\mu(A) + \mu(B)} \det (D_{B+} + D_{B-}^\dagger)^{-1} (D_{A+} + D_{A-}^\dagger)
\]
where $\mu(A) = \dim \ker A \cdot \dim \coker \, A$.  In particular, the determinant invariant $d(a,b)$ can be computed in terms of finite dimensional data.
\end{corollary}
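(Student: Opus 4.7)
The plan is to reduce the corollary to the commuting joint torsion $\tau(A,B)$ and then invoke Kaad's pseudoinverse description of torsion. Since $A$ and $B$ commute, setting $C=B$ and $D=A$ satisfies $AB=CD$ trivially, so Theorem \ref{thm:jt=di} gives $d(a,b)=\tau(A,B,B,A)$. Because $\sigma_{A,A}=1$ and $\sigma_{B,B}=1$, this equals the commuting joint torsion $\tau(A,B)=(-1)^{\lambda(A,B)}\tau(\mathcal E_A)\otimes\tau(\mathcal E_B)^*$. It therefore suffices to establish
\[
\tau(A,B) = (-1)^{\mu(A)+\mu(B)}\det(D_{B+}+D_{B-}^\dagger)^{-1}(D_{A+}+D_{A-}^\dagger).
\]

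Next, I would fold each of the exact sequences $\mathcal E_A$ and $\mathcal E_B$ into a $\mathbf Z_2$-graded complex, with $\mathcal E_{A\pm}$ and $\mathcal E_{B\pm}$ the sums of even and odd terms and differentials $D_{A\pm}$, $D_{B\pm}$ assembled from the sequence maps. By Definition 3.3.2 and Equation 3.4 of \cite{Kaad}, the torsion vector of an exact sequence viewed as a $\mathbf Z_2$-graded complex equals the determinant of the isomorphism $D_{+}+D_{-}^\dagger$ associated to any algebraic pseudoinverse, up to an explicit sign determined by the dimensions of the terms. Substituting these expressions for $\tau(\mathcal E_A)$ and $\tau(\mathcal E_B)$ into the formula above produces a ratio of the two determinants, multiplied by an overall sign.

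The final step, which is also the main obstacle, is the sign bookkeeping: I must verify that the combination of $(-1)^{\lambda(A,B)}$ with the sign corrections from the two applications of Kaad's formula reduces to $(-1)^{\mu(A)+\mu(B)}$. The contributions of $\lambda(A,B)$ pair $\dim(\ker A\cap\ker B)$ and $\dim \mathscr H/(A\mathscr H+B\mathscr H)$ with $\dim\ker A$, $\dim\ker B$, $\dim\coker A$, $\dim\coker B$, while the corrections from Kaad's formula pair these same dimensions with $\dim H_1(A,B)$; I expect the contributions from the shared positions to cancel, leaving precisely $\dim\ker A\cdot\dim\coker A+\dim\ker B\cdot\dim\coker B$. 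A parallel subtlety is that the expression $\det(D_{B+}+D_{B-}^\dagger)^{-1}(D_{A+}+D_{A-}^\dagger)$ must be interpreted as a basis-independent scalar, even though $\mathcal E_{A\pm}$ and $\mathcal E_{B\pm}$ involve different finite dimensional spaces; this is achieved by choosing bases on the shared positions consistently between the two sequences, which is exactly what happens when one identifies $\tau(\mathcal E_A)\otimes\tau(\mathcal E_B)^*$ with a scalar via \eqref{eq:duality}.
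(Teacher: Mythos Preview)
Your proposal is correct and follows the same approach as the paper: apply Theorem~\ref{thm:jt=di} to obtain $d(a,b)=\tau(A,B)$, then invoke Kaad's pseudoinverse description (Definition~3.3.2 and Equation~3.4 of \cite{Kaad}) to rewrite each torsion vector $\tau(\mathcal E_A)$, $\tau(\mathcal E_B)$ as a determinant. The paper's own argument is in fact just the one-line citation preceding the corollary, so your outline is already more detailed; the sign bookkeeping and the basis-independence subtlety you flag are exactly what one must check, and the paper leaves these implicit in the reference to \cite{Kaad}.
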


In the case when the Koszul complex $K_\bullet(A,B)$ is acyclic, we have the following consequence of Theorem \ref{thm:jt=di} and Lemma \ref{lemma:mult-lefschetz}, which was first obtained in \cite[Theorem 1.1]{Reciprocity}.

\begin{theorem}[Carey-Pincus] \label{thm:jt-finite-dimensional2}
If $A$ and $B$ are commuting Fredholm operators and the Koszul complex $K_\bullet(A,B)$ is acyclic, we have
\[
d(a,b) = \frac{\det B|_{\ker A}}{\det B|_{\coker \, A}} \, \frac{\det A|_{\coker \, B}}{\det A|_{\ker B}}.
\]
\end{theorem}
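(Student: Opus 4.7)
The proof I have in mind is essentially a two-step chain of citations: Theorem \ref{thm:jt=di} replaces the determinant invariant by joint torsion, and Lemma \ref{lemma:mult-lefschetz} evaluates that joint torsion in the acyclic case.

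First I would choose a particular quadruple of lifts in order to apply Theorem \ref{thm:jt=di}. Since $A$ and $B$ are assumed to commute on the nose (not merely modulo $\mathcal L^1$), the equation $AB = CD$ is satisfied by the natural choice $C = B$ and $D = A$. The differences $A - D$ and $B - C$ are then zero, a fortiori trace class, so $\tau(A,B,C,D) = \tau(A,B,B,A)$ is defined. By Theorem \ref{thm:jt=di} we immediately get
\[
d(a,b) = \tau(A,B,B,A).
\]

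Next I would invoke the fact, recorded just after Definition \ref{def:almost-commuting-joint-torsion}, that $\sigma_{A,A} = 1$ and $\sigma_{B,B} = 1$, so the perturbation vectors drop out of the definition of almost-commuting joint torsion and one recovers the ordinary commuting joint torsion:
\[
\tau(A,B,B,A) = \tau(A,B).
\]
At this point the Koszul complex $K_\bullet(A,B)$ is genuinely defined (we are in the commuting case), and the hypothesis that it is acyclic says exactly that $H_0(A,B) = H_1(A,B) = H_2(A,B) = 0$.

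Finally I would apply Lemma \ref{lemma:mult-lefschetz}, whose hypothesis is exactly this acyclicity, to obtain
\[
\tau(A,B) = \frac{\det B|_{\ker A}}{\det B|_{\coker \, A}} \, \frac{\det A|_{\coker \, B}}{\det A|_{\ker B}}.
\]
Chaining the three displayed equalities yields the theorem. There is essentially no obstacle in this argument beyond recognizing that the choice $C=B$, $D=A$ makes the almost-commuting machinery collapse onto the commuting one; all of the real work is already done in Theorem \ref{thm:jt=di} and Lemma \ref{lemma:mult-lefschetz}.
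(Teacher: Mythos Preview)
Your proposal is correct and is exactly the argument the paper gives: the theorem is stated as an immediate consequence of Theorem \ref{thm:jt=di} together with Lemma \ref{lemma:mult-lefschetz}, via the identification $\tau(A,B,B,A)=\tau(A,B)$ noted after Definition \ref{def:almost-commuting-joint-torsion}.
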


There are analogues of Corollary \ref{corollary:jt-finite-dimensional} and Theorem \ref{thm:jt-finite-dimensional2} more generally in the case of almost commuting operators.  
Let $a$ and $b$ be commuting units in $\mathcal L / \mathcal L^1$.  Let $A, D \in \mathcal L$ be lifts of $a$, and let $B, C \in \mathcal L$ be lifts of $b$ such that $AB=CD$.
For simplicity, assume that the Koszul complex $K_\bullet(A,B,C,D)$ is acyclic.  Pick Fredholm operators $Q$ and $R$ such that $\ind \, Q = -\ind \, A$ and $\ind \, R = -\ind \, B$.  Then 
$\tilde A = A \oplus Q \oplus I$, 
$\tilde B = B \oplus I \oplus R$, 
$\tilde C = C \oplus I \oplus R$, and 
$\tilde D = D \oplus Q \oplus I$ 
all have index zero.  Moreover, the new Koszul complex $K_\bullet(\tilde A, \tilde B, \tilde C, \tilde D)$ is acyclic, and $d(A, B) = d(\tilde A, \tilde B) = \tau(\tilde A, \tilde B)$.
Then we find
\[
\tau(\mathcal E_{\tilde A, \tilde D}) = \tau( \tilde B: \ker \tilde D \to \ker \tilde A) \otimes \tau( \tilde C: \coker \, \tilde D \to \coker \, \tilde A)
\]
\[
\tau(\mathcal E_{\tilde B, \tilde C}) = \tau( \tilde D: \ker \tilde B \to \ker \tilde C) \otimes \tau( \tilde A: \coker \, \tilde B \to \coker \, \tilde C)
\]

For $T = \tilde A, \tilde B, \tilde C, \tilde D$, pick trace class operators $L_T$ and let $\pi_T$ be the quotient map as in Definition \ref{def:perturbation-vectors}.  Thus we have isomorphisms
$\pi_T L_T : \ker T \to \coker \, T$, 
and we calculate
\begin{align*}
\sigma_{\tilde A, \tilde D} & = \det (\tilde A+L_{\tilde A})^{-1} (\tilde D+L_{\tilde D}) \tau(\pi_{\tilde A} L_{\tilde A}) \otimes \tau (\pi_{\tilde D} L_{\tilde D})^* \\
\sigma_{\tilde B, \tilde C} & = \det (\tilde B+L_{\tilde B})^{-1} (\tilde C+L_{\tilde C}) \tau(\pi_{\tilde B} L_{\tilde C}) \otimes \tau (\pi_{\tilde B} L_{\tilde C})^*
\end{align*}

\begin{corollary}
In the situation above, we find that $d(a,b)$ is given by
\begin{multline*}
\frac{\det (\pi_{\tilde D} L_{\tilde D} )^{-1} (\tilde C|_{\coker \, \tilde D} )^{-1} (\pi_{\tilde A} L_{\tilde A} ) (\tilde B|_{\ker \tilde D} ) }{\det (\pi_{\tilde B} L_{\tilde B} )^{-1} (\tilde A|_{\coker \, \tilde B} )^{-1} (\pi_{\tilde C} L_{\tilde C} ) (\tilde D|_{\ker \tilde B} ) } \times \\
\qquad \qquad \times \det (\tilde A+L_{\tilde A})^{-1} (\tilde D+L_{\tilde D}) (\tilde B+L_{\tilde B})^{-1} (\tilde C+L_{\tilde C}).
\end{multline*}
\end{corollary}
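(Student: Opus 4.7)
The plan is to unwind $\tau(\tilde A, \tilde B, \tilde C, \tilde D)$ under the acyclicity hypothesis and invoke Theorem \ref{thm:jt=di}. First, by Lemma \ref{lemma:jt-direct-sum}, Lemma \ref{lemma:di-direct-sum}, Lemma \ref{lemma:mult-lefschetz}, and Lemma \ref{lemma:di-triviality}, passing from $(A,B,C,D)$ to $(\tilde A, \tilde B, \tilde C, \tilde D)$ leaves both invariants unchanged, so together with Theorem \ref{thm:jt=di} we have $d(a,b) = \tau(\tilde A, \tilde B, \tilde C, \tilde D)$. It therefore suffices to compute the right hand side explicitly.

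From Definition \ref{def:almost-commuting-joint-torsion},
\[
\tau(\tilde A, \tilde B, \tilde C, \tilde D) = (-1)^{\lambda} \, \tau(\mathcal E_{\tilde A, \tilde D}) \otimes \tau(\mathcal E_{\tilde B, \tilde C})^* \otimes \sigma_{\tilde A, \tilde D} \otimes \sigma_{\tilde B, \tilde C}.
\]
Acyclicity of $K_\bullet(\tilde A, \tilde B, \tilde C, \tilde D)$ forces the vanishing of $\ker \tilde B \cap \ker \tilde D$, $H_1(\tilde A, \tilde B, \tilde C, \tilde D)$, and $\mathscr H / (\tilde A \mathscr H + \tilde C \mathscr H)$, so $\lambda = 0$ and each of $\mathcal E_{\tilde A, \tilde D}$ and $\mathcal E_{\tilde B, \tilde C}$ splits into two short exact sequences that reduce, via Example \ref{example:torsion=determinant}, to the four isomorphisms $\tilde D|_{\ker \tilde B}$, $\tilde A|_{\coker \tilde B}$, $\tilde B|_{\ker \tilde D}$, and $\tilde C|_{\coker \tilde D}$, exactly as in Lemma \ref{lemma:mult-lefschetz}. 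Substituting the explicit expressions for $\sigma_{\tilde A, \tilde D}$ and $\sigma_{\tilde B, \tilde C}$ displayed just above the corollary then contributes the factor $\det (\tilde A + L_{\tilde A})^{-1}(\tilde D + L_{\tilde D})(\tilde B + L_{\tilde B})^{-1}(\tilde C + L_{\tilde C})$, together with the torsion vectors of the isomorphisms $\pi_T L_T : \ker T \to \coker T$ for $T = \tilde A, \tilde B, \tilde C, \tilde D$.

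The remaining step is to contract all resulting tensor factors via the duality pairing \eqref{eq:duality}. The six torsion vectors pair naturally along matching $\det (\ker T)^*$, $\det \ker T$, $\det (\coker T)^*$, $\det \coker T$ lines, and assemble into two cyclic compositions, of $\ker \tilde D$ and of $\ker \tilde B$ respectively, namely
\[
(\pi_{\tilde D} L_{\tilde D})^{-1} \, (\tilde C|_{\coker \tilde D})^{-1} \, (\pi_{\tilde A} L_{\tilde A}) \, (\tilde B|_{\ker \tilde D})
\]
and its counterpart $(\pi_{\tilde B} L_{\tilde B})^{-1} \, (\tilde A|_{\coker \tilde B})^{-1} \, (\pi_{\tilde C} L_{\tilde C}) \, (\tilde D|_{\ker \tilde B})$. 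Their determinants, placed in numerator and denominator as dictated by the $\tau(\mathcal E_{\tilde B, \tilde C})^*$ factor and the dual structure of the $\sigma$'s, reproduce the first ratio in the statement.

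The main obstacle is purely bookkeeping: one must carefully track the ordering conventions inside each $\det H_\bullet(\cdot)$ and verify that the duality contractions produce the above compositions in the correct direction. Because acyclicity annihilates every dimension exponent that could create a sign — including the ones in $\lambda$ and in Lemma \ref{lemma:perturbation-finite} — no residual signs survive, and the displayed formula for $d(a,b)$ drops out directly.
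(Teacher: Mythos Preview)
Your proposal is correct and follows exactly the route the paper takes: the corollary in the paper has no separate proof but is presented as the direct combination of the displayed formulas for $\tau(\mathcal E_{\tilde A,\tilde D})$, $\tau(\mathcal E_{\tilde B,\tilde C})$, $\sigma_{\tilde A,\tilde D}$, $\sigma_{\tilde B,\tilde C}$ together with Theorem \ref{thm:jt=di}, and you have spelled out precisely that combination. One minor remark: your appeal to Example \ref{example:torsion=determinant} is slightly imprecise, since the four isomorphisms here go between \emph{different} spaces and so their torsion vectors are elements of determinant lines rather than scalars; but you handle this correctly in the next paragraph when you perform the duality contractions, so nothing is lost.
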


\printbibliography

\end{document}